\theoremstyle{plain}
\newtheorem{theorem}{Theorem}
\newtheorem{lemma}[theorem]{Lemma}
\newtheorem{corollary}[theorem]{Corollary}
\newtheorem{proposition}[theorem]{Proposition}
\theoremstyle{definition}
\newtheorem{example}[theorem]{Example}
\theoremstyle{remark}
\newtheorem{remark}[theorem]{Remark}
\newcommand*{\R}{\mathbb{R}}
\newcommand*{\N}{\mathbb{N}}
\newcommand*{\Z}{\mathbb{Z}}
\newcommand*{\eps}{\varepsilon}
\newcommand*{\doo}{\partial}
\newcommand*{\ol}[1]{\overline{#1}}
\newcommand{\sulut}[1]{\left( #1 \right)}
\newcommand{\joukko}[1]{\left\{ #1 \right\}}
\newcommand{\abs}[1]{\left\lvert #1 \right\rvert}
\newcommand{\norm}[1]{\left\| #1 \right\|}
\newcommand{\der}{\mathrm{d}}
\newcommand{\ip}[2]{\left\langle#1,#2\right\rangle}
\newcommand*{\E}{\mathbb{E}}
\DeclareMathOperator{\dive}{div}
\DeclareMathOperator*{\esssup}{ess \, sup}
\DeclareMathOperator*{\essinf}{ess \, inf}
\DeclareMathOperator{\Span}{span}
\newcommand*{\mint}[1]{%
  \mint@l{#1}{}%
}
\newcommand*{\mint@l}[2]{%
  \@ifnextchar\limits{%
    \mint@l{#1}%
  }{%
    \@ifnextchar\nolimits{%
      \mint@l{#1}%
    }{%
      \@ifnextchar\displaylimits{%
        \mint@l{#1}%
      }{%
        \mint@s{#2}{#1}%
      }%
    }%
  }%
}
\newcommand*{\mint@s}[2]{%
  \@ifnextchar_{%
    \mint@sub{#1}{#2}%
  }{%
    \@ifnextchar^{%
      \mint@sup{#1}{#2}%
    }{%
      \mint@{#1}{#2}{}{}%
    }%
  }%
}
\def\mint@sub#1#2_#3{%
  \@ifnextchar^{%
    \mint@sub@sup{#1}{#2}{#3}%
  }{%
    \mint@{#1}{#2}{#3}{}%
  }%
}
\def\mint@sup#1#2^#3{%
  \@ifnextchar_{%
    \mint@sup@sub{#1}{#2}{#3}%
  }{%
    \mint@{#1}{#2}{}{#3}%
  }%
}
\def\mint@sub@sup#1#2#3^#4{%
  \mint@{#1}{#2}{#3}{#4}%
}
\def\mint@sup@sub#1#2#3_#4{%
  \mint@{#1}{#2}{#4}{#3}%
}
\newcommand*{\mint@}[4]{%
  \mathop{}%
  \mkern-\thinmuskip
  \mathchoice{%
    \mint@@{#1}{#2}{#3}{#4}%
        \displaystyle\textstyle\scriptstyle
  }{%
    \mint@@{#1}{#2}{#3}{#4}%
        \textstyle\scriptstyle\scriptstyle
  }{%
    \mint@@{#1}{#2}{#3}{#4}%
        \scriptstyle\scriptscriptstyle\scriptscriptstyle
  }{%
    \mint@@{#1}{#2}{#3}{#4}%
        \scriptscriptstyle\scriptscriptstyle\scriptscriptstyle
  }%
  \mkern-\thinmuskip
  \int#1%
  \ifx\\#3\\\else_{#3}\fi
  \ifx\\#4\\\else^{#4}\fi  
}
\newcommand*{\mint@@}[7]{%
  \begingroup
    \sbox0{$#5\int\m@th$}%
    \sbox2{$#5\int_{}\m@th$}%
    \dimen2=\wd0 %
    \let\mint@limits=#1\relax
    \ifx\mint@limits\relax
      \sbox4{$#5\int_{\kern1sp}^{\kern1sp}\m@th$}%
      \ifdim\wd4>\wd2 %
        \let\mint@limits=\nolimits
      \else
        \let\mint@limits=\limits
      \fi
    \fi
    \ifx\mint@limits\displaylimits
      \ifx#5\displaystyle
        \let\mint@limits=\limits
      \fi
    \fi
    \ifx\mint@limits\limits
      \sbox0{$#7#3\m@th$}%
      \sbox2{$#7#4\m@th$}%
      \ifdim\wd0>\dimen2 %
        \dimen2=\wd0 %
      \fi
      \ifdim\wd2>\dimen2 %
        \dimen2=\wd2 %
      \fi
    \fi
    \rlap{%
      $#5%
        \vcenter{%
          \hbox to\dimen2{%
            \hss
            $#6{#2}\m@th$%
            \hss
          }%
        }%
      $%
    }%
  \endgroup
}
\setlist{nolistsep} 
\title{Variable exponent Calderón's problem in one dimension}
\author{Tommi Brander \\ tommi.brander@ntno.no}
\affil{Technical University of Denmark, Department of Applied Mathematics and Computer Science}
\author{David Winterrose \\ dawin@dtu.dk}
\affil{Technical University of Denmark, Department of Applied Mathematics and Computer Science}
\begin{document}

\maketitle

\begin{abstract}
We consider one-dimensional Calder\'on's problem for the variable exponent~$p\sulut{\cdot}$-Laplace equation and find out that more can be seen than in the constant exponent case.
The problem is to recover an unknown weight (conductivity) in the weighted $p(\cdot)$-Laplace equation from Dirichlet and Neumann data of solutions.
We give a constructive and local uniqueness proof for conductivities in $L^\infty$ restricted to the coarsest sigma-algebra that makes the exponent~$p\sulut{\cdot}$ measurable.
\end{abstract}

\paragraph{Keywords} Calderón's problem, inverse problem, variable exponent, non-standard growth, elliptic equation, quasilinear equation

\paragraph{Mathematics subject classification} 35R30 (primary) 34A55, 35J92, 35J62, 35J70, 46N20, 34B15 (secondary)




\section{Introduction}

Calder\'on's problem~\cite{Calderon:1980} asks if an electric conductivity~$\gamma$ in an object~$\Omega$ can be reconstructed from boundary measurements of current and voltage, which are given by the Dirichlet-to-Neumann map (DN map) $u|_{\doo \Omega} \to \gamma \nabla u \cdot \nu |_{\doo \Omega}$, where $\nu$ is the unit outer normal.
In one dimension, the answer to Calder\'on's problem is negative; only the resistance, that is, the total resistivity $\int_{I} \gamma^{-1} \der x$, can be recovered~\cite[section 1.1]{Feldman:Salo:Uhlmann}, where $I \subset \R$ is an open bounded interval.
A similar result holds for $p$-Calder\'on's problem, where the forward model is given by the weighted $p$-Laplace equation $-\dive \sulut{\gamma \abs{u'}^{p-2}u'} = 0$: it is only possible to recover the value of the integral
\begin{equation}
\int_I \gamma^{1/(1-p)} \der x
\end{equation}
from the DN~map~\cite[theorem 2.2]{Brander:2016:apr}.
This is true for any constant $1 < p < \infty$, and is also a special case of corollary~\ref{lemma:av_sigma} in this paper.
We describe what can be recovered in the case of a variable exponent $p(\cdot)$.
This is the first investigation of an inverse problem related to the variable exponent $p(\cdot)$-Laplacian.
The problem in the constant exponent case was introduced by Salo and Zhong~\cite{Salo:Zhong:2012} in 2012, after which other theoretical results have been published~\cite{Brander:2016:jan,Brander:Ilmavirta:Kar:2018,Brander:Kar:Salo:2015,Brander:Harrach:Kar:Salo:2018,Guo:Kar:Salo:2016,Kar:Wang}, as well as a numerical study~\cite{Hannukainen:Hyvonen:Mustonen:2019}.
The works of Sun and others consider the problem of recovering the dependence of $A$ in
\begin{equation}
\dive \sulut{A(x,u,\nabla u) \nabla u} = 0
\end{equation}
on all of $x,u$, and $\nabla u$, but they either do not admit degenerate equations~\cite{Munoz:Uhlmann:2018,Sun:1996} or assume the equivalent of constant $p$~\cite{Hervas:Sun:2002}.

One can think of the variable exponent conductivity equation
\begin{equation}
    -\dive \sulut{\gamma \abs{\nabla u}^{p(x)-2}\nabla u} = 0
\end{equation}
as arising from a non-linear Ohm's law
\begin{equation}
    -j = \gamma \abs{\nabla u}^{p(x)-2}\nabla u,
\end{equation}
which has a power law dependence between the current $j$ and the gradient of the electric potential~$u$ at every point, but where the exponent in the power law varies from point to point.
We use this terminology and intuition in the article.
An example of a power-law type Ohm's law is certain polycrystalline compounds near the transition to superconductivity~\cite{Bueno:Longo:Varela:2008,Dubson:Herbert:Calabrese:Harris:Patton:Garland:1988}, where the exponent~$p$ is a function of temperature.
However, it is not clear how relevant electrical impedance tomography of such materials would be.

\subsection{Results}

We use two approaches.
The first is to consider the limit of the Dirichlet-to-Neumann map (hereafter DN~map) as the difference of Dirichlet boundary values grows without bound or approaches zero; this gives information about the value of the conductivity at the maximum or minimum of $p$, when the extreme value is reached on a set of positive measure.

The second is to consider the DN~map as a dual pairing in an $L^2$ space of essentially the conductivity and another function, and then determine what one can say about the conductivity based on this information.
This gives uniqueness for the conductivity in $L^\infty$ restricted to the coarsest sigma-algebra that makes $p\sulut{\cdot}$ measurable.
We present two proofs. The simpler one requires knowing the full DN~map and gives a non-constructive proof.
The other proof is constructive and requires knowledge of the DN~map only on an open set, but requires working with fairly explicit formulae for arbitrary order derivatives of composite functions and inverse functions.
This approach is similar to a classical moment problem~\cite{Schmudgen:2017}; whereas the Hausdorff moment problem asks if there exists a measure~$\mu$ such that a given sequence $m_n$ satisfies
\begin{equation}
    m_n = \int_I x^n \der \mu(x),
\end{equation}
we ask what kinds of functions~$f$ satisfy
\begin{equation}
    m_n = \int_I \sulut{g(x)}^n f(x) \der x
\end{equation}
for a specific sequence $m_n$ arising from the forward problem, and for functions $g(x) = 1/(p(x)-1)$ that are derived from the variable exponent~$p$.
The results are very similar to what is achieved for one dimensional inverse source problem under attenuation and using multiple frequencies~\cite{Brander:Ilmavirta:Tyni}.
The numerical methods used there could also be applied to the problem discussed in this paper.

In the case of nonconstant~$p$, even though we only choose our input data from an essentially one-dimensional space (difference of the two Dirichlet data points is a real number), we can recover information on an infinite-dimensional space, if the power $p$ is not piecewise constant.
Here the changing non-linear nature of the forward problem makes the inverse problem easier.
Nonlinearity of the forward problem has also been used for advantage in the study of non-linear hyperbolic equations~\cite{deHoop:Uhlmann:Wang:2018,Kurylev:Lassas:Oksanen:Uhlmann:2018}.

Our main theorems follow.
First, let $I$ be equipped with the Lebesgue measure on the Lebesgue sigma-algebra.
Define $\sigma(p)$ to be the sigma-algebra on~$I$ generated by sets of the form $p^{-1}(A)$ where $A \subseteq \R$ is a Borel set in $\R$.
We consider $p$ to be a function, even though we may write $p \in L^r(I)$.
The spaces $L^r(I,\sigma(p))$ are the Lebesgue spaces of almost everywhere equal $\sigma(p)$-measurable functions of finite $L^r$-norm with respect to the Lebesgue measure restricted to $\sigma(p)$.
It is important to emphasize that $L^r(I,\sigma(p))$ is not a true subspace of $L^r(I)$ because $\sigma(p)$ does not contain all null-sets of the Lebesgue sigma-algebra on $I$. However, the map $L^r(I,\sigma(p))\to L^r(I):[g]_{\sigma(p)} \mapsto [g]$ is a well-defined (possibly not surjective) isometry, so we may regard $L^r(I,\sigma(p))$ as a complete and therefore closed subspace, but emphasize that the equivalence classes are different.
This subspace is characterized by the property that every equivalence class has a $\sigma(p)$-measurable representative.
Finally, we define $P \colon L^2(I) \to L^2(I,\sigma(p))$ as the orthogonal projection (conditional expectation in probabilistic terms) onto the closed subspace identified with $L^2(I,\sigma(p))$.

For more on sigma-algebras generated by functions or sets we refer to the book of Dellacherie and Meyer~\cite[definition~5]{Dellacherie:Meyer:1978}.
\begin{theorem}
\label{thm:characterization}
Consider an open bounded interval~$I$.
Suppose the conductivity~$\gamma \in L^\infty_+(I) \subset L^2(I)$, and suppose there exist constants~$p^\pm$ such that almost everywhere $1 < p^- < p(x) < p^+ < \infty$.
Then the nonlinear projection
\begin{equation}
\widetilde P \colon L^2(I) \cap L^\infty_+(I) \to L^2(I,\sigma(p)) \cap L^\infty_+(I,\sigma(p))
\end{equation}
defined by
\begin{equation}
\widetilde P(\gamma)(x) = \sulut{P\sulut{\gamma^{-1/(p-1)}} (x)}^{-\sulut{p(x)-1}}
\end{equation}
 can be reconstructed from the Dirichlet-to-Neumann map.
Furthermore, there exists an arbitrarily small open set of Dirichlet boundary values which are sufficient for reconstructing the projection.
\end{theorem}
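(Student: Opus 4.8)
The plan is to reduce the forward model to an explicit moment problem for a function of the unknown, and then invert it. In one dimension the equation $-\Der{x}\bigl(\gamma\abs{u'}^{p(x)-2}u'\bigr)=0$ just says that the current $\gamma\abs{u'}^{p(x)-2}u'$ equals a constant $c$; on the branch $c>0$ one has $u'=\sulut{c/\gamma}^{1/(p(x)-1)}>0$, so the difference of the Dirichlet data of the corresponding solution is
\begin{equation}
D(c)=\int_I\sulut{c/\gamma}^{1/(p(x)-1)}\der x ,
\end{equation}
while the Neumann data are $c$ up to the sign of the outer normal; the branch $c<0$ is the mirror image, by oddness of the $p(x)$-Laplacian, and $D(0)=0$. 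Hence the Dirichlet-to-Neumann map is equivalent to knowing the strictly increasing continuous bijection $c\mapsto D(c)$ of $(0,\infty)$ onto itself, or — what the DN map literally supplies — its inverse $D\mapsto c$. First I would record this reduction together with the elementary observation that, writing $t=\log c$, $g=1/(p-1)\in L^\infty(I)$ (with values in the compact interval $[1/(p^+-1),1/(p^--1)]$) and $f=\gamma^{-g}\in L^\infty_+(I)$, one has $D\sulut{e^t}=\int_I e^{tg(x)}f(x)\,\der x$; since $g$ is bounded and $f\in L^1$, this extends to an \emph{entire} function $\Phi$ of $t$, whose Taylor coefficients at $t=0$ are exactly the moments $m_n=\int_I g(x)^n f(x)\,\der x$.

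Next I would identify what the moments see. As $g$ is a homeomorphic image of $p$, $\sigma(g)=\sigma(p)$, so each $g^n$, and more generally each bounded Borel function of $g$, is $\sigma(p)$-measurable; by the defining property of the conditional expectation $P$, $\int_I h f\,\der x=\int_I h\,P(f)\,\der x$ for all such $h$, hence $m_n=\int_I g^n\,P(f)\,\der x$. Writing $\mu$ for the push-forward of Lebesgue measure on $I$ under $g$ — a compactly supported finite measure, computable from the known exponent $p$ — and using the Doob--Dynkin lemma to write the $\sigma(p)$-measurable function $P(f)$ as $\psi\circ g$ for a Borel $\psi$, the change of variables gives $m_n=\int s^n\psi(s)\,\der\mu(s)$. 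Here $\psi\in L^\infty(\mu)$ and $\psi\ge\essinf f>0$ $\mu$-a.e., since $P$ is an order-preserving $L^\infty$-contraction fixing constants, so $\widetilde P(\gamma)=\sulut{\psi\circ g}^{-(p-1)}$ is a well-defined element of $L^2(I,\sigma(p))\cap L^\infty_+(I,\sigma(p))$. Because $\mu$ has compact support, polynomials are dense in $L^2(\mu)$, so the sequence $(m_n)$ determines $\psi$ in $L^2(\mu)$ \emph{uniquely}; concretely, Gram--Schmidt applied to $1,s,s^2,\dots$ in $L^2(\mu)$ yields orthonormal polynomials $q_k$ and then $\psi=\sum_k\langle\psi,q_k\rangle_{L^2(\mu)}\,q_k$, each coefficient $\langle\psi,q_k\rangle_{L^2(\mu)}$ being an explicit finite linear combination of the $m_n$. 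This already gives the non-constructive version (full DN map $\Rightarrow$ all $m_n$ $\Rightarrow$ $\psi$ $\Rightarrow$ $\widetilde P(\gamma)$), and it gives the constructive one once the $m_n$ can actually be computed.

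It remains to handle the ``arbitrarily small open set of Dirichlet data'' claim. An open set of boundary data in $\R^2$ maps, under the open map $(\alpha,\beta)\mapsto\beta-\alpha$, onto an open set of Dirichlet differences, hence — by strict monotonicity and continuity of $c\mapsto D(c)$ — onto an open interval of currents, i.e.\ an open $t$-interval around some $t_0$ on which $\Phi(t)=D\sulut{e^t}$ is known. Since $\Phi$ is entire, this determines all the $m_n$ non-constructively; for a construction I would instead differentiate at $t_0$: $\Phi^{(n)}(t_0)=\int s^n e^{t_0 s}\psi(s)\,\der\mu(s)$ are the moments against $\mu$ of $e^{t_0 s}\psi(s)\in L^2(\mu)$, which by the same density and Gram--Schmidt argument recover $e^{t_0 s}\psi(s)$, hence $\psi$, hence $P(f)=\psi\circ g$ and $\widetilde P(\gamma)=\sulut{\psi\circ g}^{-(p-1)}$. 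The one genuinely technical point is to pass from what the DN map literally gives — the inverse map $\Lambda\colon D\mapsto c$, known only on an open set — to the derivatives $\Phi^{(n)}(t_0)$: since $\Phi=\Lambda^{-1}\circ\expo$, one needs the explicit Fa\`a di Bruno and Lagrange-inversion formulas for arbitrary-order derivatives of composite and of inverse functions, with enough bookkeeping to keep the resulting series under control. I expect this inversion step to be the main obstacle; the other ingredients — the ODE reduction, entirety of $\Phi$, the conditional-expectation identities, density of polynomials in $L^2(\mu)$, and the verification that $L^2(I,\sigma(p))$ embeds isometrically and that all functions in the formula for $\widetilde P$ are bounded above and below — are routine.
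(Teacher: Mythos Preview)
Your proposal is correct and lands on the same two-pronged structure as the paper: a non-constructive density argument identifying what is determined as $P(f)$, and a constructive argument via higher-order derivatives using Fa\`a di Bruno together with inverse-function formulae. The constructive halves are essentially identical --- the paper differentiates $\Lambda_\gamma(K(m))$ at the point $K=1$ and inductively extracts $\int_I f\,(1/(p-1))^n\,\der x$, exactly your moments $m_n$, from the same combinatorial bookkeeping you anticipate.

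Where you differ is in the non-constructive half and in the packaging. The paper works on $I$ itself and invokes the multiplicative system theorem to show that $\Span\{K^{q(\cdot)}:K>0\}$ is $L^r$-dense in $L^r(I,\sigma(p))$; you instead push forward by $g=1/(p-1)$ to a compactly supported measure $\mu$ on $\R$, invoke Doob--Dynkin to write $P(f)=\psi\circ g$, and then use Stone--Weierstrass density of polynomials in $L^2(\mu)$. Your route is arguably more transparent, and it also yields something the paper's non-constructive section does not: because you first observe that $\Phi(t)=\int_I e^{tg}f$ is \emph{entire}, you get a clean non-constructive \emph{local} argument (knowledge on any open interval determines $\Phi$ by analytic continuation), whereas the paper's multiplicative-system proof uses the full DN map and only the derivative section is local. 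Conversely, the paper's choice to differentiate at $K=1$ (your $t_0=0$) slightly simplifies the algebra, and its weak DN map $\Lambda_\gamma(m)=\int f\,K_m^{q}$ satisfies $\Lambda_\gamma(m)=K_m\,m$, so your ``what the DN map literally supplies'' is indeed $m\mapsto K_m$ and the inversion you flag as the main obstacle is exactly the step the paper handles with the explicit higher-order inverse formula.
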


\begin{corollary}
Suppose $\gamma_1,\gamma_2 \in L^\infty_+(I)$.
If $p \colon I \to \R$ is a measurable injection and $\widetilde P (\gamma_1) = \widetilde P (\gamma_2)$, then $\gamma_1 = \gamma_2$.
\end{corollary}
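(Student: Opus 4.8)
The plan is to show that, under the hypothesis that $p$ is a measurable injection, the nonlinear projection $\widetilde P$ coincides with the identity map on $L^2(I)\cap L^\infty_+(I)$; the corollary is then immediate. By the defining formula $\widetilde P(\gamma)(x)=\sulut{P\sulut{\gamma^{-1/(p-1)}}(x)}^{-(p(x)-1)}$, and since $\gamma^{-1/(p-1)}\in L^2(I)$ for $\gamma$ in the domain of $\widetilde P$, it suffices to prove that the orthogonal projection $P\colon L^2(I)\to L^2(I,\sigma(p))$ acts as the identity, i.e.\ that every $L^2$ function on $I$ admits a $\sigma(p)$-measurable representative. Indeed, in that case $P(\gamma^{-1/(p-1)})=\gamma^{-1/(p-1)}$ and hence $\widetilde P(\gamma)=\gamma$ for every admissible $\gamma$, so $\widetilde P(\gamma_1)=\widetilde P(\gamma_2)$ yields $\gamma_1=\gamma_2$.

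The key point is therefore the measure-theoretic claim that, for an injective $p$, the sigma-algebra $\sigma(p)$ agrees with the Borel sigma-algebra $\mathcal B(I)$ up to Lebesgue null sets. First I would reduce to a Borel-measurable $p$: a Lebesgue-measurable function becomes Borel measurable after deleting a Lebesgue null set, so there is a Borel set $I_0\subseteq I$ of full measure on which $p$ is Borel and still injective, and the trace of $\sigma(p)$ on $I_0$ equals $\sigma(p|_{I_0})$. Now $p|_{I_0}\colon I_0\to\R$ is an injective Borel map out of a standard Borel space, so the Lusin--Souslin theorem applies: $p(I_0)$ is Borel and $p|_{I_0}$ is a Borel isomorphism onto its image. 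Consequently, for any $E\in\mathcal B(I_0)$ we have $E=(p|_{I_0})^{-1}(p(E))$ with $p(E)$ Borel, so $E\in\sigma(p|_{I_0})$; that is, $\sigma(p|_{I_0})=\mathcal B(I_0)$. Transferring back across the null set $I\setminus I_0$ gives $\mathcal B(I)\subseteq\overline{\sigma(p)}$, the Lebesgue completion of $\sigma(p)$.

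Granting this, an arbitrary $g\in L^2(I)$ is a.e.\ equal to a Borel function, which by the inclusion $\mathcal B(I)\subseteq\overline{\sigma(p)}$ is in turn a.e.\ equal to a $\sigma(p)$-measurable function; hence $[g]$ has a $\sigma(p)$-measurable representative and $Pg=g$. Applying this with $g=\gamma^{-1/(p-1)}$ completes the argument. The main obstacle I expect is the bookkeeping with null sets: as stressed just before Theorem~\ref{thm:characterization}, $\sigma(p)$ does not contain all Lebesgue null sets, so the Lusin--Souslin argument can only recover $\mathcal B(I)$ rather than the full Lebesgue sigma-algebra, and the point is precisely that this discrepancy is harmless at the level of $L^2$-equivalence classes, where null sets are invisible and $P$ thereby becomes trivial. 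A secondary technical point, which is why I would work on the conull Borel subset $I_0$, is to pass from a merely Lebesgue-measurable $p$ to a Borel representative without destroying injectivity.
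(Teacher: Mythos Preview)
Your proof is correct and follows the same strategy as the paper: show that when $p$ is injective the projection $P$ acts as the identity, whence $\widetilde P$ is the identity and the corollary follows. The paper's proof is a single sentence asserting this without justification, whereas you actually supply the nontrivial descriptive-set-theoretic argument (reduction to a Borel representative on a conull set and the Lusin--Souslin theorem) needed to show that every Lebesgue class in $L^2(I)$ has a $\sigma(p)$-measurable representative.
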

\begin{proof}
In this case the projection $P$ is the identity map and the powers in $\widetilde P$ cancel, so $\widetilde P$ is also the identity map.
\end{proof}

\begin{remark}
\label{remark:P_tilde}
The following properties hold:
\begin{itemize}
\item If $\gamma$ is $\sigma(p)$-measurable, then $\widetilde P(\gamma) = \gamma$.
\item We have $\widetilde P \circ \widetilde P = \widetilde P$.
\item If $p \equiv 2$ and $\abs{I} = 1$, then $\widetilde P$ is the harmonic mean.
\item If $p$ is constant, then $\widetilde P(\gamma)$ is a type of average:
\begin{equation}
    \widetilde P (\gamma) = \sulut{\frac{1}{\abs{I}} \int_{I} \gamma^{-1/(p-1)} \der x}^{-(p-1)}.
\end{equation}
\end{itemize}
\end{remark}
\begin{proof}
If $\gamma$ is $\sigma(p)$-measurable, then so is $\gamma^{-1/(p-1)}$.
Hence, the projection~$P\sulut{\gamma^{-1/(p-1)}} = \gamma^{-1/(p-1)}$ and the exponents inside and outside the projection cancel.
This proves the first point.

To prove the middle point, one observes that in $\widetilde P \circ \widetilde P$ the powers between the two projections~$P$ cancel, after which $P \circ P = P$, since $P$ is a projection.

If $p \equiv 2$, then inside the projection~$P$ there is $\gamma^{-1}$.
The sigma-algebra generated by the constant function~$p \equiv 2$ is the trivial one; that is, $\sigma(p) = \joukko{ \emptyset, I }$.
The only functions measurable with respect to this sigma-algebra are constant functions, whence $P\sulut{\gamma^{-1}}$ must be a constant function; call the constant~$C$.
Since integrals over $\sigma(p)$-measurable sets are conserved by the projection, we have
\begin{equation}
\int_I C \der x = \int_I \gamma^{-1} \der x \iff C = \frac{1}{\abs{I}} \int_I \gamma^{-1} \der x.
\end{equation}
Raising this to the power $-(p-1) \equiv -1$ gives the third claim, and the proof of the fourth claim is similar.
\end{proof}

This theorem is proven in a non-constructive way in section~\ref{sec:characterization} using the multiplicative system theorem (theorem~\ref{thm:multisystem}).
It turns out that the finite linear combinations of functions $K^{p(x)/(p(x)-1)}$ indexed by the real numbers $K \ge 0$ are dense in the space of  functions that are both $\sigma(p)$-measurable and square integrable.
A constructive and local proof is developed in section~\ref{sec:derivatives}.
The constructive proof requires explicit formulae for higher order derivatives of composite functions (Faà di Bruno's formula~\cite{Johnson:2002}) and inverse functions~\cite{Kaneiwa:2016}.
Aside from the difficulty of calculating the derivatives, the proof is quite similar to a related proof of Brander, Ilmavirta and Tyni~\cite{Brander:Ilmavirta:Tyni}.

The next theorem is proven in section~\ref{sec:limits}.
The proof is constructive, easy to implement numerically and reasonably elementary.
\begin{theorem}
\label{thm:pos_meas}
Consider an open bounded interval~$I$.
Suppose the conductivity~$\gamma \in L^\infty(I)$ is almost everywhere bounded from below by a positive constant.
Suppose further that the exponent $p(\cdot)$ reaches its essential minimum~$p^-$ on a set of positive measure~$Q_+$ and essential maximum~$p^+$ on a set of positive measure~$Q_-$.
Then the DN map~$\Lambda_\gamma$, suitably scaled, gives
\begin{align}
\lim_{m \to \infty} \ol K_m^{-p^-/(p^- - 1)} \frac{1}{\abs{Q_+}} \Lambda_\gamma(m) & = \sulut{\mint{-}_{Q_+} \sulut{\gamma(x)}^{-1/\sulut{p(x)-1}} \der x}^{-\sulut{p^- -1}} \text{ and} \\
\lim_{m \to 0} \ol K_m^{-p^+/(p^+ - 1)} \frac{1}{\abs{Q_-}} \Lambda_\gamma(m) & = \sulut{\mint{-}_{Q_-} \sulut{\gamma(x)}^{-1/\sulut{p(x)-1}} \der x}^{-\sulut{p^+ -1}},
\end{align}
where $m$ is the difference between the Dirichlet boundary values and $\ol K_m$ can be computed from knowledge of $m$ and $p(\cdot)$.
\end{theorem}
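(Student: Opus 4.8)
The plan is to solve the one-dimensional forward problem explicitly, identify the scalar quantity the DN~map delivers as a function of the Dirichlet difference~$m$, and then extract the two limits by dominated convergence, exploiting that a power $c^{1/(p(x)-1)}$ concentrates on the set where $p$ attains its essential minimum when the base~$c$ is large, and on the set where $p$ attains its essential maximum when $c$ is small.

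\emph{The forward solution.}
Fix $m>0$ and write $I=(a,b)$. Since $\sulut{\gamma\abs{u'}^{p(x)-2}u'}'=0$ in the weak sense, the flux $\gamma\abs{u'}^{p(x)-2}u'$ is a.e.\ equal to a constant $c=c(m)$; for $m>0$ one has $c>0$ and $u'>0$, so
\begin{equation}
u'(x)=\sulut{c/\gamma(x)}^{1/(p(x)-1)},\qquad m=\int_I\sulut{c/\gamma}^{1/(p(x)-1)}\der x=:F_\gamma(c).
\end{equation}
As $\gamma$ and $p$ are bounded and $\gamma$ is bounded away from zero, the integrand is bounded and measurable and $F_\gamma\colon(0,\infty)\to(0,\infty)$ is a continuous strictly increasing bijection; hence the forward problem has a unique solution, $c(m)=F_\gamma^{-1}(m)$, and $c(m)\to\infty$ as $m\to\infty$, $c(m)\to0^+$ as $m\to0^+$. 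Multiplying the flux identity by $u'$ and integrating shows that the DN~map delivers $\Lambda_\gamma(m)=c(m)\,m=\int_I\gamma\abs{u'}^{p(x)}\der x$. Carrying out the same computation with $\gamma\equiv1$ defines $\ol K_m:=F_1^{-1}(m)$, the constant flux of the unweighted problem with Dirichlet difference~$m$; it solves $\int_I\ol K_m^{1/(p(x)-1)}\der x=m$, so it depends only on $m$ and $p(\cdot)$, and $\ol K_m\to\infty$ (resp.\ $\to0^+$) as $m\to\infty$ (resp.\ $m\to0^+$).

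\emph{Concentration and the first limit.}
Let $m\to\infty$, so $c(m),\ol K_m\to\infty$. I claim $c^{-1/(p^--1)}F_\gamma(c)\to\int_{Q_+}\gamma^{-1/(p(x)-1)}\der x$. Writing the integrand as $c^{1/(p(x)-1)-1/(p^--1)}\gamma^{-1/(p(x)-1)}$ and using $p(x)\ge p^-$ a.e., the exponent of $c$ is $\le0$, so for $c\ge1$ the integrand is dominated by the integrable function $\gamma^{-1/(p(x)-1)}$; on $Q_+$ it equals $\gamma^{-1/(p^--1)}$, independent of $c$, while on $I\setminus Q_+$ we have $p(x)>p^-$ a.e., so the exponent of $c$ is strictly negative and the integrand tends to $0$. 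Dominated convergence gives the claim, and since $p\equiv p^-$ on $Q_+$ the limit equals $\abs{Q_+}\mint{-}_{Q_+}\gamma^{-1/(p(x)-1)}\der x$. The identical argument with $\gamma\equiv1$ yields $\ol K_m^{-1/(p^--1)}m\to\abs{Q_+}$. Dividing the two resulting asymptotics of $m$,
\begin{equation}
\frac{c(m)}{\ol K_m}\longrightarrow\sulut{\mint{-}_{Q_+}\gamma^{-1/(p(x)-1)}\der x}^{-(p^--1)}.
\end{equation}
Substituting $\Lambda_\gamma(m)=c(m)\,m$ and $m=\ol K_m^{1/(p^--1)}\sulut{\abs{Q_+}+o(1)}$, and using $-p^-/(p^--1)+1/(p^--1)=-1$,
\begin{equation}
\ol K_m^{-p^-/(p^--1)}\frac{\Lambda_\gamma(m)}{\abs{Q_+}}=\frac{c(m)}{\ol K_m}\sulut{1+o(1)}\longrightarrow\sulut{\mint{-}_{Q_+}\gamma^{-1/(p(x)-1)}\der x}^{-(p^--1)},
\end{equation}
which is the first formula.

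\emph{The second limit and the main obstacle.}
For $m\to0^+$ one repeats the argument with $c\to0^+$ in place of $c\to\infty$: now for $0<c\le1$ the exponent $1/(p(x)-1)-1/(p^+-1)$ is $\ge0$ (since $p(x)\le p^+$ a.e.), so the same dominated-convergence argument gives $c^{-1/(p^+-1)}F_\gamma(c)\to\int_{Q_-}\gamma^{-1/(p(x)-1)}\der x$ and likewise $\ol K_m^{-1/(p^+-1)}m\to\abs{Q_-}$, after which the assembly above goes through verbatim with $p^-,Q_+$ replaced by $p^+,Q_-$. The calculations are elementary; the one point needing care is the dominating function, because the varying exponent $1/(p(x)-1)$ could a priori make $c^{1/(p(x)-1)}$ outgrow $c^{1/(p^\mp-1)}$ — this is excluded exactly by comparing $1/(p(x)-1)$ with $1/(p^\mp-1)$ on the relevant range of~$c$. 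The hypothesis $\abs{Q_\pm}>0$ is used precisely here, both to make the limiting integrals positive (so the quotients and the final averages make sense) and to justify the stated normalisation: were the extreme values of~$p$ attained only on null sets, those limits would vanish and a different scaling would be required.
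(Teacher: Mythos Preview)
Your argument is correct and, in fact, slightly more streamlined than the paper's. Both proofs rest on the same dominated-convergence concentration (as the base tends to $\infty$ or $0$, the power $c^{1/(p(x)-1)}$ localises on $Q_+$ or $Q_-$), but the paper never writes the clean identity $\Lambda_\gamma(m)=K_m\,m$; instead it normalises $\Lambda_\gamma$ by $K_m^{-q^+}$ directly and then must control the ratio $K_m/\ol K_m$. To that end the paper proves a separate uniform-boundedness lemma for $K_m/\ol K_m$ (via a mean-value argument applied to the defining integrals for $K_m^l$ and $K_m^u$) before extracting the limit of the ratio from a single asymptotic relation containing an $o(K_m^{1/(p^--1)}+\ol K_m^{1/(p^--1)})$ error. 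You sidestep this: by computing the two limits $c(m)^{-1/(p^--1)}m\to\abs{Q_+}\mint{-}_{Q_+}\gamma^{-1/(p(x)-1)}\der x$ and $\ol K_m^{-1/(p^--1)}m\to\abs{Q_+}$ separately, their quotient gives $c(m)/\ol K_m$ immediately, with no a~priori bound needed. Your assembly via $\Lambda_\gamma(m)=c(m)\,m$ and $m=\ol K_m^{1/(p^--1)}(\abs{Q_+}+o(1))$ then collapses the scaling in one line. The paper's route is more modular (the boundedness lemma may be of independent interest), but yours is shorter and more transparent.
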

This theorem allows the recovery of the average of the conductivity to a known power over the set where the exponent is largest or smallest.
The situation is far more delicate if the exponent does not reach its maximum/minimum or does so in a set of zero measure.
The behaviour seems to depend on $\abs{\joukko{x \in I ; p(x) = a}}$ as $a \to p^\pm$.
We do not investigate the matter in more detail in this article.

\subsection*{Acknowledgements}
T.B.\ was funded by grant no.\ 4002-00123 from the Danish Council for Independent Research | Natural Sciences.
We would like to thank Kim Knudsen for several discussions and Nathaniel Eldredge for graciously providing a key result\footnote{\url{https://mathoverflow.net/a/292978/}}.
We would also like to thank the anonymous referees for their valuable input, which improved this article significantly.

\section{Forward problem}
\label{sec:forward}

In this section we discuss the existence and uniqueness for the forward problem in general dimension, and define the voltage-to-current or DN~map, also in general dimension.
We specialize to the one-dimensional inverse problem in the following sections.
Before proceeding, we define the variable exponent Lebesgue space $L^p(\Omega$), with $\Omega \subset \R^d$ a bounded open set and $d \geq 1$.
The variable exponent Sobolev spaces are defined in terms of it in the usual way.
We assume throughout that $p \colon \Omega \to [1,\infty]$ is measurable and bounded away from one and infinity.
Then, following the book of Diening, Harjulehto, Hästö and R\r{u}\v{z}i\v{c}ka~\cite[sections 2 and 3]{Diening:Harjulehto:Hasto:Ruzicka:2011},
\begin{equation}
    L^p(\Omega) = \joukko{f \colon \Omega \to \R \text{ measurable } ; \lim_{\lambda \to 0} \int_\Omega \frac{1}{p(x)} \abs{\lambda f(x)}^{p(x)} \der x = 0 },
\end{equation}
where functions which agree almost everywhere are considered identical, and
\begin{equation}
\norm{f}_{L^p(\Omega)} = \inf \joukko{\lambda > 0 ; \int_\Omega \frac{1}{p(x)} \abs{\frac{f(x)}{\lambda}}^{p(x)} \der x \le 1}.
\end{equation}

Consider a weight function, or conductivity,
\begin{equation}
\gamma \in L_+^\infty (\Omega) = \joukko{f \in L^\infty(\Omega) ; \essinf f > 0}.
\end{equation}
The Dirichlet problem for the weighted variable exponent $p(\cdot)$-Laplacian is
\begin{align}
\dive \sulut{ \gamma \abs{\nabla u}^{p(x)-2} \nabla u } = 0 &\text{ in } \Omega \\
u = f &\text{ on } \doo \Omega.
\end{align}
Uniqueness and existence for the variable exponent function has been investigated in variable exponent Sobolev spaces~\cite{Diening:Harjulehto:Hasto:Ruzicka:2011}, though one often considers the equation
\begin{equation}
\dive \sulut{p(x) \abs{\nabla u}^{p(x)-2} \nabla u } = 0
\end{equation}
as the basic example~\cite[section 2]{Harjulehto:Hasto:Le:Nuortio:2010}.
That equation arises from minimizing the functional
\begin{equation}
\label{eq:abnormal_energy}
u \mapsto \int_\Omega \abs{\nabla u}^{p(x)} \der x,
\end{equation}
while we prefer to work with the energy
\begin{equation}
\label{eq:normal_energy}
u \mapsto \int_\Omega \frac{\gamma}{p(x)} \abs{\nabla u}^{p(x)} \der x.
\end{equation}
Since the function $x \mapsto \frac{1}{p}\abs{x}^p$ is the convex conjugate (by the Legendre-Fenchel transform) of $x \mapsto \frac{1}{q}\abs{x}^{q}$, where $q$ is the H{ö}lder conjugate of $p$, we see the division by $p(x)$ as natural.

Calderón's problem asks one to recover the conductivity~$\gamma$ from the DN~map~$\Lambda_\gamma$, which, in the strong form, is given by the formula
\begin{equation}
\Lambda_\gamma (f) = \gamma \abs{\nabla u}^{p-2}\nabla u \cdot \nu|_{\doo \Omega},
\end{equation}
where $u|_{\doo \Omega}=f$, 
i.e.\ the input is the potential or Dirichlet boundary value and the output is the current flowing out of the domain.
This definition may fail for irregular solutions~$u$ or domains~$\Omega$.
One typically uses the weak DN~map instead, which we derive next.
We remark that regardless of the energy/equation we use, the energy of the equation and its weak DN~map are different.
By formally integrating by parts, starting from the strong definition of DN~map and multiplying by a test function $v$ with $v|_{\doo \Omega} = g$, we get:
\begin{equation}
\begin{split}
&\int_{\doo \Omega} \gamma \abs{\nabla u}^{p(x)-2} \nabla u \cdot \nu g \der S(x) \\
= & \int_{\Omega} \dive\sulut{\gamma \abs{\nabla u}^{p(x)-2} \nabla u} v \der x + \int_{\Omega} \gamma \abs{\nabla u}^{p(x)-2} \nabla u \cdot \nabla v \der x  \\
= & \int_{\Omega} \gamma \abs{\nabla u}^{p(x)-2} \nabla u \cdot \nabla v \der x.
\end{split}
\end{equation}
If we choose $g=f$ and $v = u$, then we have
\begin{equation}
\ip{\Lambda_\gamma \sulut{f}}{f} = \int_{\Omega} \gamma \abs{\nabla u}^{p(x)} \der x.
\end{equation}
We take this ``quadratic'' form as the definition of the DN~map.
It can be defined as a functional on $W^{1,p}(\Omega)/W^{1,p}_0(\Omega)$, where $W^{1,p}_0$ is the closure of the space of $W^{1,p}$ functions with compact support~\cite[section 8.1]{Diening:Harjulehto:Hasto:Ruzicka:2011}.

\begin{lemma}
\label{lemma:unique_existence}
Suppose $1 < p^- \leq p(x) \leq p^+ < \infty$ almost everywhere, and that $\Omega \subset \R^d$, $d \in \Z_+$, is a bounded open set that supports the Poincaré inequality.
Consider boundary values $f \in W^{1,p(\cdot)} \cap L^\infty(\Omega)$.
Then there exist unique minimizers in $W^{1,p(\cdot)} \cap L^\infty(\Omega) + f$ to the energies~\eqref{eq:normal_energy} and \eqref{eq:abnormal_energy}.
\end{lemma}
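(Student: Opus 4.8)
The plan is to apply the direct method of the calculus of variations, handling both energies at once. Write $\mathcal A = \joukko{ u \in W^{1,p(\cdot)}(\Omega) \cap L^\infty(\Omega) ; u - f \in W^{1,p(\cdot)}_0(\Omega) }$ for the admissible class, and let $J$ denote either energy, so that in both cases the integrand has the form $(x,\xi) \mapsto c(x)\abs{\xi}^{p(x)}$ with $c = \gamma/p$ for \eqref{eq:normal_energy} and $c = 1$ for \eqref{eq:abnormal_energy}. Since $\gamma \in L^\infty(\Omega)$ and $\nabla f \in L^{p(\cdot)}(\Omega)$, the relevant modular is finite, so $f \in \mathcal A$, $J(f) < \infty$, and $m := \inf_{\mathcal A} J$ is a finite nonnegative number. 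I would fix a minimizing sequence $(u_j) \subset \mathcal A$ and aim to extract a minimizer from it.

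The first step I would carry out is a truncation producing a uniformly bounded minimizing sequence. Set $k = \norm{f}_{L^\infty(\Omega)}$ and replace each $u_j$ by $T_k u_j = \max(-k, \min(k, u_j))$. As $\abs{f} \le k$ almost everywhere, $T_k f = f$ and $T_k u_j - f = T_k u_j - T_k f$ lies in $W^{1,p(\cdot)}_0(\Omega)$, which is stable under truncation, so $T_k u_j \in \mathcal A$; moreover $\nabla T_k u_j = \mathbf{1}_{\joukko{\abs{u_j} < k}} \nabla u_j$ almost everywhere, hence $\abs{\nabla T_k u_j} \le \abs{\nabla u_j}$ pointwise and $J(T_k u_j) \le J(u_j)$. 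After this reduction I may assume $\norm{u_j}_{L^\infty(\Omega)} \le k$ for all $j$.

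Next I would establish coercivity and compactness. Using $\essinf \gamma > 0$, a bound on $J(u_j)$ gives a bound on $\int_\Omega \abs{\nabla u_j}^{p(x)} \der x$, and the relation between the modular and the norm in the variable exponent space~\cite{Diening:Harjulehto:Hasto:Ruzicka:2011} bounds $\norm{\nabla u_j}_{L^{p(\cdot)}(\Omega)}$; the Poincaré inequality on $\Omega$ then bounds $\norm{u_j - f}_{W^{1,p(\cdot)}(\Omega)}$ and hence $\norm{u_j}_{W^{1,p(\cdot)}(\Omega)}$. Since $1 < p^- \le p^+ < \infty$, the space $W^{1,p(\cdot)}(\Omega)$ is reflexive, so along a subsequence $u_j \rightharpoonup u$ weakly in $W^{1,p(\cdot)}(\Omega)$; the set $\mathcal A$ is convex and strongly closed, hence weakly closed, and the uniform $L^\infty$ bound passes to the limit because weak convergence in $W^{1,p(\cdot)}$ forces convergence in distributions, so $u \in \mathcal A$. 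Existence then follows from weak lower semicontinuity of convex integral functionals: for each $x$ the map $\xi \mapsto c(x)\abs{\xi}^{p(x)}$ is convex, being the composition of the increasing convex function $t \mapsto t^{p(x)}$ on $[0,\infty)$ with the norm, and $(x,\xi) \mapsto c(x)\abs{\xi}^{p(x)}$ is a nonnegative Carath\'eodory integrand, so $v \mapsto J(v)$ is sequentially weakly lower semicontinuous; thus $J(u) \le \liminf_j J(u_j) = m$ and $u$ is a minimizer.

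For uniqueness I would invoke strict convexity: $\xi \mapsto \abs{\xi}^{p(x)}$ is strictly convex on $\R^d$ whenever $p(x) > 1$, so if $u_1, u_2 \in \mathcal A$ both minimize, then $\tfrac12(u_1+u_2) \in \mathcal A$ satisfies $m \le J\bigl(\tfrac12(u_1+u_2)\bigr) \le \tfrac12 J(u_1) + \tfrac12 J(u_2) = m$, and equality forces $\nabla u_1 = \nabla u_2$ almost everywhere; since $u_1 - u_2 \in W^{1,p(\cdot)}_0(\Omega)$ has vanishing gradient, the Poincaré inequality gives $u_1 = u_2$. The argument is identical for \eqref{eq:abnormal_energy} and \eqref{eq:normal_energy}. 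The step I expect to be the main obstacle is the $L^\infty$ bookkeeping --- verifying that truncation genuinely keeps the sequence in $\mathcal A$, i.e.\ that $T_k u_j - f$ remains in $W^{1,p(\cdot)}_0(\Omega)$, and that the uniform bound is inherited by the weak $W^{1,p(\cdot)}$ limit --- since once the modular--norm correspondence and the reflexivity of $W^{1,p(\cdot)}(\Omega)$ are quoted, the remainder is the textbook direct method.
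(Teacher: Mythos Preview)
Your proof is correct and takes essentially the same approach as the paper: the direct method, citing reflexivity of $W^{1,p(\cdot)}$, convexity and weak lower semicontinuity of the integrand, and strict convexity for uniqueness; the paper's own proof is just a terse outline of these same ingredients. One small remark: after your truncation step the uniform $L^\infty$ bound on $u_j$ already controls $\norm{u_j}_{L^{p(\cdot)}}$, so invoking a Poincar\'e inequality for coercivity is redundant (which is fortunate, since the $p(\cdot)$-Poincar\'e can fail for irregular~$p$)---it is only genuinely needed, and only in its $p\equiv 1$ form, in the uniqueness step to pass from $\nabla(u_1-u_2)=0$ to $u_1=u_2$.
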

\begin{proof}
The proof follows the direct method in calculus of variations.
The variable exponent Sobolev space is a reflexive Banach space~\cite[theorem 8.1.6]{Diening:Harjulehto:Hasto:Ruzicka:2011} and the functional is convex, since $t \mapsto c t^p$ is convex for all $p \geq 1$ and $c \geq 0$.
The energies are lower semicontinuous~\cite[theorem 3.2.9 and section 3.2]{Diening:Harjulehto:Hasto:Ruzicka:2011}.
Coercivity of the functional requires the Poincaré inequality with $p\equiv 1$ (since we assume bounded boundary values).
Therefore the functionals have unique minimizers.
\end{proof}

This lemma holds for any bounded, open interval in $\R$.
More generally, 1-Poincaré inequality is satisfied for example in John domains~\cite[section 8.2]{Diening:Harjulehto:Hasto:Ruzicka:2011}, and in particular in Lipschitz domains.

\section{Recovering conductivity}
\label{sec:1d}

We write $\Omega = I = \; ]a,b[ \; \subset \R$.
Assume $1 < p^- \le p(x) \le p^+ < \infty$ almost everywhere on $I$, and that $p$ is a bounded measurable function.
In fact, define
\begin{equation}
p^- = \essinf_{x \in I} p(x) \text{ and } p^+ = \esssup_{x \in I} p(x).
\end{equation}
Later, we will use similar notations for the essential supremum and infimum of another exponent $q(\cdot)$, which is the conjugate Hölder exponent to $p(\cdot)$.

Then, at least formally, the forward problem is
\begin{equation}
\begin{cases}
-\sulut{\gamma \abs{u'}^{p(x)-2}u'}' = 0 \text{ in } I \\
u (a) = A \\
u(b) = B.
\end{cases}
\end{equation}
We suppose $A \leq B$, whence there should exist a constant~$K \ge 0$ such that, for almost every~$x$,
\begin{equation}
\begin{split}
&\gamma \sulut{u'}^{p(x)-1} = K \\
\iff & u' = \sulut{K/\gamma}^{1/(p(x)-1)}
\end{split}
\end{equation}
and hence
\begin{equation}
u(x) = A + \int_{a}^x \sulut{K/\gamma(s)}^{1/\sulut{p(s)-1}} \der s.
\end{equation}
Using $u(b) = B$ we get:
\begin{equation}
\label{eq:K_dirichlet}
\int_a^b \sulut{K/\gamma(s)}^{1/\sulut{p(s)-1}} \der s = B- A.
\end{equation}
Writing $m = B-A$, we have implicitly defined a function $K_m= K  \colon \R_+ \to \R_+$ by writing as $K_m$ the constant $K$ which satisfies the above equation with $m = B-A$.

The next lemma justifies the previous heuristic discussion, and also implies that the constant~$K_m$ is unique, since the minimizer is unique.
\begin{lemma}
Suppose that for a given $B-A = m \geq 0$, there exists a function $v \in W^{1,p}(I)$ satisfying the boundary values $(A,B)$, and a constant~$K_m$ for which the following equality is true almost everywhere in~$x$:
\begin{equation}
    \gamma(x) \sulut{v'(x)}^{p(x)-1} = K_m.
\end{equation}
Then $v$ is the unique minimizer of energy~\eqref{eq:normal_energy} with boundary values $A$ and $B$, and thus solves the variable exponent conductivity equation.
\end{lemma}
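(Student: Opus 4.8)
The plan is to exploit the convexity of the energy integrand in the gradient variable: once $v$ is known to lie in the admissible class and the boundary terms are seen to cancel, minimality reduces to a single application of the tangent-line inequality for convex functions, and uniqueness is then inherited from lemma~\ref{lemma:unique_existence}. First I would check that $v$ is admissible. The hypothesis $\gamma(x)(v'(x))^{p(x)-1} = K_m$ in particular presupposes $v'(x) \ge 0$ for almost every $x$, and it gives $v'(x) = (K_m/\gamma(x))^{1/(p(x)-1)}$. Since $\gamma \in L^\infty_+(I)$ and $1 < p^- \le p(x) \le p^+ < \infty$ almost everywhere, this expression is bounded above and below by positive constants, so $v \in W^{1,\infty}(I) \subset W^{1,p}(I) \cap L^\infty(I)$; together with the prescribed endpoint values $v(a)=A$, $v(b)=B$, the function $v$ belongs to the class in which lemma~\ref{lemma:unique_existence} produces a unique minimizer of the energy~\eqref{eq:normal_energy}. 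Hence it suffices to show that $v$ is a minimizer.

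Write $E$ for the energy in~\eqref{eq:normal_energy}, and let $w \in W^{1,p}(I) \cap L^\infty(I)$ be any competitor with $w(a) = A$ and $w(b) = B$. For almost every $x$ the function $\xi \mapsto \frac{\gamma(x)}{p(x)}\abs{\xi}^{p(x)}$ is convex and differentiable (here $p^- > 1$ is used), with derivative $\xi \mapsto \gamma(x)\abs{\xi}^{p(x)-2}\xi$, so the tangent-line inequality gives
\[
\frac{\gamma(x)}{p(x)}\abs{w'(x)}^{p(x)} \ge \frac{\gamma(x)}{p(x)}\abs{v'(x)}^{p(x)} + \gamma(x)\abs{v'(x)}^{p(x)-2}v'(x)\,\sulut{w'(x) - v'(x)}.
\]
Because $v' \ge 0$ we have $\gamma(x)\abs{v'(x)}^{p(x)-2}v'(x) = \gamma(x)(v'(x))^{p(x)-1} = K_m$, a constant, so the cross term equals $K_m\,(w'(x) - v'(x))$, which is integrable since $w - v \in W^{1,1}(I)$. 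Integrating over $I$ and using that $w - v$ has an absolutely continuous representative vanishing at $a$ and $b$,
\[
E(w) - E(v) \ge K_m \int_a^b \sulut{w'(x) - v'(x)}\,\der x = K_m\big[(w-v)(b) - (w-v)(a)\big] = 0.
\]
Thus $E(w) \ge E(v)$ for every admissible $w$, so $v$ is a minimizer, and by uniqueness (lemma~\ref{lemma:unique_existence}) it is the minimizer.

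It remains to note that the minimizer solves the equation. Either one invokes that a minimizer of $E$ is a weak solution of its Euler--Lagrange equation, $\int_I \gamma\abs{v'}^{p(x)-2}v'\,\phi'\,\der x = 0$ for all $\phi \in W^{1,p}_0(I)$, or one argues directly: $\gamma\abs{v'}^{p(x)-2}v' = K_m$ is almost everywhere equal to a constant, hence its distributional derivative vanishes, i.e.\ $-(\gamma\abs{v'}^{p(x)-2}v')' = 0$ in $I$. In the degenerate case $K_m = 0$ the identity forces $v' = 0$ almost everywhere, so $m = 0$, $v \equiv A$, and the constant function trivially minimizes the nonnegative energy~\eqref{eq:normal_energy}; this case is covered by the same reasoning. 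The only point requiring a little care — rather than a genuine obstacle — is the integrability of the cross term and the integration by parts on $I$, both of which are immediate here precisely because the hypothesis turns $\gamma\abs{v'}^{p(x)-2}v'$ into a constant; the remainder is the standard direct-method convexity argument.
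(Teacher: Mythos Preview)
Your proof is correct and follows essentially the same approach as the paper: invoke lemma~\ref{lemma:unique_existence} for uniqueness, then use the tangent-line inequality for the convex integrand together with the fact that $\gamma\abs{v'}^{p(x)-2}v'$ is the constant $K_m$ to conclude $E(w)\ge E(v)$ for every competitor. Your version is a bit more explicit (verifying admissibility, handling $K_m=0$, and noting the equation follows), but the core argument is identical.
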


We use the same proof as Diening, Harjulehto, Hästö and R\r{u}\v{z}i\v{c}ka~\cite[lemma 13.1.4]{Diening:Harjulehto:Hasto:Ruzicka:2011}, but they consider a slightly different equation.

\begin{proof}
By lemma~\ref{lemma:unique_existence} there exists a unique minimizer.
As such, we only need to establish that $v$ minimizes the energy.

Let $w$ be such that $w-v \in W_0^{1,p\sulut{\cdot}}\sulut{I}$.
Since $\gamma(x) \sulut{v'(x)}^{p(x)-1}$ is constant almost everywhere,
\begin{equation}
    \int_a^b \gamma(x) \sulut{v'(x)}^{p(x)-1} \sulut{w'-v'} \der x = 0.
\end{equation}

We use the inequality
\begin{equation}
\frac{1}{p}\abs{y}^p \ge \frac{1}{p}\abs{z}^p + \abs{z}^{p-1}\sulut{y-z},
\end{equation}
which follows from the convexity of the differentiable function $y \mapsto \frac{1}{p}\abs{y}^p$.
The inequality implies
\begin{equation}
    \int_a^b \frac{\gamma\sulut{x}}{p\sulut{x}} \abs{w'}^{p\sulut{x}} \der x
    \geq \int_a^b \frac{\gamma\sulut{x}}{p\sulut{x}} \abs{v'}^{p\sulut{x}} \der x
    + \int_a^b \gamma\sulut{x} \abs{v'}^{p\sulut{x}-1}\sulut{w'-v'} \der x,
\end{equation}
which implies, since the last integral is zero, 
\begin{equation}
    \int_a^b \frac{\gamma\sulut{x}}{p\sulut{x}} \abs{w'}^{p\sulut{x}} \der x
    \geq \int_a^b \frac{\gamma\sulut{x}}{p\sulut{x}} \abs{v'}^{p\sulut{x}} \der x.
\end{equation}
\end{proof}

\begin{lemma} \label{lemma:m2K_bijection}
The map $m \mapsto K_m$ is a strictly increasing, continuous bijection.
\end{lemma}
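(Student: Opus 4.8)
The plan is to reduce everything to one elementary auxiliary map. Set
\begin{equation}
F \colon [0,\infty) \to [0,\infty), \qquad F(K) = \int_a^b \sulut{K/\gamma(s)}^{1/\sulut{p(s)-1}} \der s .
\end{equation}
By~\eqref{eq:K_dirichlet} together with the uniqueness of the minimizer established above, $K_m$ is \emph{the} solution of $F(K_m) = m$, so $K_m = F^{-1}(m)$ and it suffices to prove that $F$ is a strictly increasing continuous bijection of $[0,\infty)$ — hence of $\R_+$ — onto itself. Strict monotonicity is the pointwise picture integrated: for almost every $s$ the exponent $1/\sulut{p(s)-1}$ lies in the fixed interval $[1/\sulut{p^+-1},\,1/\sulut{p^--1}] \subset \sulut{0,\infty}$, so $K \mapsto K^{1/\sulut{p(s)-1}}$ is strictly increasing on $[0,\infty)$; multiplying by the a.e.\ strictly positive weight $\gamma(s)^{-1/\sulut{p(s)-1}}$ preserves this, and integrating a pointwise strict inequality between nonnegative functions over $I$ (a set of positive measure) gives $F(K_1) < F(K_2)$ whenever $0 \le K_1 < K_2$.

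For continuity I would use dominated convergence on each compact piece of the $K$-axis. Fix $M \ge 1$. Since $\gamma \in L^\infty_+(I)$ there are constants $0 < c \le C < \infty$ with $c \le \gamma(s) \le C$ for a.e.\ $s$, and then for $0 \le K \le M$ the integrand is bounded a.e.\ by the constant $M^{1/\sulut{p^--1}} \max\sulut{c^{-1/\sulut{p^--1}}, c^{-1/\sulut{p^+-1}}}$, which is integrable on the bounded interval $I$. As $K \mapsto \sulut{K/\gamma(s)}^{1/\sulut{p(s)-1}}$ is continuous for each fixed $s$, dominated convergence yields continuity of $F$ on $[0,M]$, and $M$ was arbitrary.

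Surjectivity then follows from the two endpoint behaviours plus the intermediate value theorem: clearly $F(0) = 0$, while the number $c_0 := \int_a^b \gamma(s)^{-1/\sulut{p(s)-1}} \der s$ is finite and strictly positive, so for $K \ge 1$ one has $F(K) \ge K^{1/\sulut{p^+-1}} c_0 \to \infty$ as $K \to \infty$. Being continuous and strictly increasing with range $[0,\infty)$, $F$ is a bijection, and therefore so is $K_m = F^{-1}(m)$, with the same monotonicity and continuity.

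The main obstacle here is not conceptual — everything reduces to single-variable calculus — but rather the bookkeeping in the dominating bound, since the exponent $1/\sulut{p(s)-1}$ varies with $s$ and $\gamma$ is controlled only almost everywhere; once the two-sided bounds $1 < p^- \le p \le p^+ < \infty$ and $\essinf \gamma > 0$, $\gamma \in L^\infty$ are invoked to produce a single a.e.\ dominating constant on each $K$-interval $[0,M]$, the argument is routine.
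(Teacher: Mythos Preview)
Your argument is correct and follows exactly the same route as the paper: reduce to showing that $K \mapsto \int_a^b (K/\gamma)^{1/(p-1)}\,\der x$ is a strictly increasing continuous surjection, use pointwise monotonicity for strictness, dominated convergence for continuity, and $F(0)=0$, $F(K)\to\infty$ for surjectivity. The paper's version is terser and omits the explicit dominating constant you wrote out, but the approach is identical.
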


\begin{proof}
It suffices to prove that the map
\begin{equation}
K \mapsto \int_a^b \sulut{K/\gamma(x)}^{1/\sulut{p(x)-1}} \der x
\end{equation}
is a strictly increasing surjection.

Since $1 < p < \infty$, the map is strictly increasing.
We have both $0 \mapsto 0$ and $\int_a^b \sulut{K/\gamma(x)}^{1/\sulut{p(x)-1}} \der x \to \infty$ as $K \to \infty$, since $\gamma$ and $1/(p-1)$ are positive.
The integrand is continuous with respect to $K$ for almost every $x\in I$, which implies continuity via dominated convergence, given the bounded interval and boundedness of $\gamma$ and $p$.
Hence, we have surjectivity.
\end{proof}

The DN map is $\Lambda_\gamma \colon \R^2 \to \R$, $\Lambda_\gamma \sulut{A,B} = \int_a^b \gamma \abs{u'}^p(x) \der x$, as established in section~\ref{sec:forward}.
Assuming $A \le B$ and inserting $u'$ gives the following lemma:
\begin{lemma}
\label{lemma:DN-px}
Suppose $A\leq B$. Then
\begin{equation}
\Lambda_\gamma(A,B) = \Lambda_\gamma(m) = \int_a^b \gamma^{-1/(p(x)-1)} K_m^{p(x)/(p(x)-1)} \der x,
\end{equation}
where $K_m$ is also a function of the conductivity~$\gamma$.
\end{lemma}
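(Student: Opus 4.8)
The plan is to combine the quadratic definition of the DN~map from Section~\ref{sec:forward} with the explicit form of the minimizer derived above. In one dimension the weak DN~map reads $\ip{\Lambda_\gamma(f)}{f} = \int_a^b \gamma \abs{u'}^{p(x)} \der x$, where $u$ is the unique minimizer of energy~\eqref{eq:normal_energy} with $u(a)=A$ and $u(b)=B$, which exists and is unique by lemma~\ref{lemma:unique_existence}.

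First I would record that the right-hand side depends on $(A,B)$ only through $m=B-A$: the energy~\eqref{eq:normal_energy} is unchanged under $u \mapsto u + c$, so the minimizers for boundary data $(A,B)$ and $(A+c,B+c)$ differ by the constant $c$ and have the same derivative; hence $\Lambda_\gamma(A,B)$ is a well-defined function $\Lambda_\gamma(m)$ of $m$ alone.

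Second, by lemma~\ref{lemma:m2K_bijection} there is a unique $K_m \ge 0$ with $\int_a^b \sulut{K_m/\gamma(x)}^{1/(p(x)-1)} \der x = m$; since $K_m/\gamma$ is bounded above and below by positive constants and $1/(p(\cdot)-1)$ is bounded, the function $v(x) = A + \int_a^x \sulut{K_m/\gamma(s)}^{1/(p(s)-1)} \der s$ has bounded derivative, hence lies in $W^{1,p(\cdot)}(I)$, attains the boundary values $(A,B)$, and satisfies $\gamma(x)\sulut{v'(x)}^{p(x)-1} = K_m$ almost everywhere. The minimization lemma preceding lemma~\ref{lemma:m2K_bijection} then identifies $v$ with the minimizer $u$, so $u' = \sulut{K_m/\gamma}^{1/(p(x)-1)}$ a.e.; in particular $u' \ge 0$, so $\abs{u'}^{p(x)} = \sulut{u'}^{p(x)}$.

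Finally I would substitute this into the energy integrand and simplify the exponents:
\[
\gamma \sulut{u'}^{p(x)} = \gamma \sulut{K_m/\gamma}^{p(x)/(p(x)-1)} = K_m^{p(x)/(p(x)-1)}\, \gamma^{1 - p(x)/(p(x)-1)} = K_m^{p(x)/(p(x)-1)}\, \gamma^{-1/(p(x)-1)},
\]
using $1 - p(x)/(p(x)-1) = -1/(p(x)-1)$. Integrating over $I$ yields the claimed formula, and $K_m$ is a function of $\gamma$ through the defining relation~\eqref{eq:K_dirichlet}. There is no real obstacle beyond the bookkeeping already carried out in the previous lemmas; the only points requiring a moment's care are the membership $v \in W^{1,p(\cdot)}(I)$ and the sign of $u'$, both of which follow from the two-sided bounds on $\gamma$ and on $p(\cdot)-1$.
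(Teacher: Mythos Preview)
Your proof is correct and follows the paper's approach: the paper simply states that inserting the explicit form of $u'$ into the quadratic DN~map gives the formula, and you carry out exactly this substitution, with the added care of checking $v \in W^{1,p(\cdot)}(I)$, the sign of $u'$, and the dependence on $m$ alone.
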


First we observe that we can recover $\int_a^b \gamma^{-1/(p(x)-1)} \der x$ from the Dirichlet to Neumann map as its unique fixed point.
\begin{lemma}
\label{lemma:fixed}
Suppose $B > A$.
\begin{itemize}
\item If $B-A > \int_a^b \gamma^{-1/(p(x)-1)} \der x$, then $\Lambda_\gamma(B-A) > B-A$. Also, $K>1$.
\item If $B-A = \int_a^b \gamma^{-1/(p(x)-1)} \der x$, then $\Lambda_\gamma(B-A) = B-A$. Also, $K=1$.
\item If $ B-A < \int_a^b \gamma^{-1/(p(x)-1)} \der x$, then $\Lambda_\gamma(B-A) < B-A$. Also, $K<1$.
\end{itemize}
\end{lemma}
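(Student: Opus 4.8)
\section*{Proof plan for Lemma~\ref{lemma:fixed}}

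The plan is to reduce everything to the single algebraic identity $\frac{p(x)}{p(x)-1} = 1 + \frac{1}{p(x)-1}$ together with the defining equation for $K_m$. Write $c = \int_a^b \gamma^{-1/(p(x)-1)} \der x$. Equation~\eqref{eq:K_dirichlet} says precisely that
\begin{equation}
\int_a^b \gamma(x)^{-1/(p(x)-1)} K_m^{1/(p(x)-1)} \der x = m,
\end{equation}
and Lemma~\ref{lemma:DN-px} says $\Lambda_\gamma(m) = \int_a^b \gamma(x)^{-1/(p(x)-1)} K_m^{p(x)/(p(x)-1)} \der x$. Substituting $K_m^{p(x)/(p(x)-1)} = K_m \cdot K_m^{1/(p(x)-1)}$ into the latter, the integrand for $\Lambda_\gamma(m)$ is exactly the integrand for $m$ multiplied by the constant $K_m$; comparing these two will give the three cases once we know how $K_m$ compares with $1$.

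First I would settle the claims about $K$. Plugging $K_m = 1$ into the displayed equation above gives $m = c$. Since $m \mapsto K_m$ is a strictly increasing bijection by Lemma~\ref{lemma:m2K_bijection}, it follows that $K_m > 1 \iff m > c$, that $K_m = 1 \iff m = c$, and that $K_m < 1 \iff m < c$. This is exactly the ``Also, $K \gtrless 1$'' part in each of the three bullets (recall $B > A$, so $m > 0$ and $K_m > 0$).

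Next I would compare $\Lambda_\gamma(m)$ with $m$. In the middle case $K_m = 1$ we get $\Lambda_\gamma(m) = \int_a^b \gamma(x)^{-1/(p(x)-1)} \der x = c = m$ directly. If $m > c$ then $K_m > 1$ by the previous step, so pointwise a.e.
\begin{equation}
\gamma(x)^{-1/(p(x)-1)} K_m^{p(x)/(p(x)-1)} = K_m \cdot \gamma(x)^{-1/(p(x)-1)} K_m^{1/(p(x)-1)} > \gamma(x)^{-1/(p(x)-1)} K_m^{1/(p(x)-1)},
\end{equation}
the strict inequality holding because $\gamma(x)^{-1/(p(x)-1)} > 0$ a.e.\ and $K_m > 1$; integrating over $I$ yields $\Lambda_\gamma(m) > m$. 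The case $m < c$ is identical with the inequalities reversed, using $0 < K_m < 1$, giving $\Lambda_\gamma(m) < m$.

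There is no real obstacle here: the only point needing a word of care is that the pointwise inequality is strict a.e., which is immediate from positivity of $\gamma^{-1/(p(x)-1)}$ and strictness of $K_m \neq 1$, so that integration preserves the strict inequality on the interval of positive length $|I| = b-a$. Everything else is the bookkeeping identity $\frac{p(x)}{p(x)-1} = 1 + \frac{1}{p(x)-1}$ and the monotonicity already proved in Lemma~\ref{lemma:m2K_bijection}.
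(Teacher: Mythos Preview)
Your proof is correct and follows essentially the same route as the paper: determine whether $K_m \gtrless 1$ via the defining equation~\eqref{eq:K_dirichlet} together with the strict monotonicity of Lemma~\ref{lemma:m2K_bijection}, and then compare the integrands of $\Lambda_\gamma(m)$ and $m$ pointwise. Your factorization $K_m^{p/(p-1)} = K_m \cdot K_m^{1/(p-1)}$ in fact gives the tidy identity $\Lambda_\gamma(m) = K_m\, m$, which is a slight streamlining of the paper's direct comparison $K_m^{p/(p-1)} \gtrless K_m^{1/(p-1)}$, but the argument is otherwise identical.
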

\begin{proof}
The positive number $k$ is a fixed point of the DN map if and only if
\begin{align}
k = \Lambda_\gamma (k) &= \int_a^b \gamma^{-1/(p(x)-1)} K_k^{p(x)/(p(x)-1)} \der x \text{ where}\\
k & = \int_a^b \sulut{K_k/\gamma}^{1/\sulut{p(x)-1}} \der x,
\end{align}
which implies
\begin{equation*}
\int_a^b \gamma^{-1/(p(x)-1)} K_k^{p(x)/(p(x)-1)} \der x =
\int_a^b \gamma^{-1/\sulut{p(x)-1}}  K_k^{1/\sulut{p(x)-1}} \der x.
\end{equation*}
This is true if and only if $K_k = 1$, since $\gamma > 0$ and $p > 0$.

If $K = 1$, then by equation~\eqref{eq:K_dirichlet} we have $B-A = \int_I \gamma^{-1/(p(x)-1)} \der x$.
By bijectivity of $m \mapsto K_m$, there only exists one $m$ with $K_m=1$.
This proves the middle point of the claim.

If $m > k$, then $K_m > 1$, and hence
\begin{equation}
K_m^{p(x)/(p(x)-1)} >  K_m^{1/\sulut{p(x)-1}},
\end{equation}
which implies
\begin{equation}
\begin{split}
& \Lambda_\gamma(B-A) = \int_a^b \gamma^{-1/(p(x)-1)} K_m^{p(x)/(p(x)-1)} \der x \\
> & \int_a^b \gamma^{-1/\sulut{p(x)-1}}  K_m^{1/\sulut{p(x)-1}} \der x = B-A.
\end{split}
\end{equation}
The same argument with reversed inequalities holds when $m < k$.
\end{proof}
By using, for example, the half-interval search we get:
\begin{corollary}
\label{lemma:av_sigma}
The quantity
\begin{equation}
\int_I \gamma^{-1/(p(x)-1)} \der x
\end{equation}
can be recovered from the Dirichlet to Neumann map.
\end{corollary}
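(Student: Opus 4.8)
The plan is to read off the target quantity as the unique fixed point of the DN~map, which is exactly what Lemma~\ref{lemma:fixed} delivers. That lemma shows that the function $m \mapsto \Lambda_\gamma(m) - m$ is strictly positive whenever $m > \int_I \gamma^{-1/(p(x)-1)} \der x$, vanishes precisely when $m$ equals that integral, and is strictly negative when $m$ is smaller; in particular $\int_I \gamma^{-1/(p(x)-1)} \der x$ is the only fixed point of $\Lambda_\gamma$ on $\R_+$. Since the DN~map is assumed known, the integral is already determined by this characterization, and it remains to exhibit a concrete procedure that extracts it.

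First I would record that $\Lambda_\gamma$ is continuous on $\R_+$: by Lemma~\ref{lemma:m2K_bijection} the map $m \mapsto K_m$ is continuous, and then $m \mapsto \int_a^b \gamma^{-1/(p(x)-1)} K_m^{p(x)/(p(x)-1)} \der x$ is continuous by dominated convergence, just as in the proof of Lemma~\ref{lemma:m2K_bijection}: on a neighbourhood of any point $K_m$ is bounded, so $\gamma^{-1/(p(x)-1)} K_m^{p(x)/(p(x)-1)}$ is dominated by a constant, which is integrable because $\abs I < \infty$ and $\gamma \in L^\infty_+(I)$, $p^- \le p \le p^+$. Hence $g(m) := \Lambda_\gamma(m) - m$ is continuous.

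Next I would produce a starting bracket for a half-interval (bisection) search. The number $\int_I \gamma^{-1/(p(x)-1)} \der x$ is finite and strictly positive, again because $\abs I < \infty$ and $\gamma \in L^\infty_+(I)$ force $\gamma^{-1/(p(x)-1)}$ to lie between two positive constants. Therefore, by the sign pattern of Lemma~\ref{lemma:fixed}, $g$ is negative for all sufficiently small $m > 0$ and positive for all sufficiently large $m$. Concretely, one may take any $m_0 > 0$ and halve it repeatedly until $g(m_0) < 0$, and take any $m_1$ and double it repeatedly until $g(m_1) > 0$; both loops terminate after finitely many steps since the target is finite and positive. Then $g$ changes sign on $[m_0, m_1]$, so bisection applied to $g$ generates nested intervals of lengths tending to zero, each containing a zero of $g$; by the uniqueness in Lemma~\ref{lemma:fixed} that zero equals $\int_I \gamma^{-1/(p(x)-1)} \der x$, and the bisection converges to it.

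I do not expect a genuine obstacle here: the corollary is essentially a repackaging of Lemma~\ref{lemma:fixed}. The only points needing a little care are the continuity of $\Lambda_\gamma$ (so that the intermediate value theorem and the convergence of bisection apply) and the remark that the target integral is finite and bounded below by a positive constant (so that a valid initial bracket exists and can be found by the halving/doubling procedure).
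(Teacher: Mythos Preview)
Your proposal is correct and follows essentially the same approach as the paper, which simply remarks that the corollary follows from Lemma~\ref{lemma:fixed} by a half-interval search. You have merely filled in the details the paper leaves implicit (continuity of $\Lambda_\gamma$ and the construction of an initial bracket), and these details are all in order.
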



The next remark concerns the inverse problem with additional interior data of the type that can, under some idealizations, be recovered using hybrid imaging methods such as ultrasound mediated electrical impedance tomography (UMEIT, also called AET for acousto-electric tomography), conductivity density impedance imaging (CDII) and magnetic resonance electrical impedance tomography (MREIT).~\cite{Bal:2013,Kuchment:Steinhauer:2012,Kwon:Woo:Yoon:Seo:2002}
\begin{remark}[Interior data and variable exponent]
\label{remark:interior_1d}
If we have knowledge of interior power data of type $\gamma \abs{u'}^{r(x)}$, where $0 \le r(x) < \infty$, then the conductivity can be recovered at all points where $p(x)-r(x)\neq 1$.
Indeed, a simple calculation gives
\begin{equation}
\gamma \abs{u'}^{r(x)} = \gamma \sulut{K/\gamma(x)}^{r(x)/(p(x)-1)} = \sulut{\gamma(x)}^{(p(x)-r(x)-1)/(p(x)-1)} K^{r(x)/(p(x)-1)}.
\end{equation}
We can choose the Dirichlet data $B-A$ so that $K=1$ by lemma~\ref{lemma:m2K_bijection}.
Hence, $\gamma$ can be recovered whenever it has a nonzero exponent, or, equivalently, whenever $p(x)-r(x) \neq 1$.
\end{remark}
This generalizes a result of Straub~\cite[chapter 3]{Straub:2016}, which was for $p \equiv 2$.


\subsection{Identification at extremes}
\label{sec:limits}

Next we recover the value of $\gamma$ at the points where $p(x)$ takes its maximum or minimum value.
First write
\begin{equation}
q(x) = \frac{p(x)}{p(x)-1} \text{ and } f(x) = \sulut{\gamma(x)}^{-1/(p(x)-1)}.
\end{equation}
These are both injective mappings of $p(x)$ and $\gamma(x)$, respectively, and $q$ is the Hölder dual exponent of $p$.
The maxima of $p$ correspond to the minima of $q$ and vice versa.

\begin{lemma}
\label{lemma:K_DN_lim}
Suppose the exponent~$q$ reaches its essential supremum (respectively infimum) value~$q^+$ ($q^-$) on a set $Q_+$ ($Q_-$) of positive measure.
Then
\begin{align}
\lim_{K \to \infty} K^{-q^+}\int_a^b f(x) K^{q(x)} \der x &= \int_{Q_+} f(x) \der x \text{ and} \\
\lim_{K \to 0} K^{-q^-}\int_a^b f(x) K^{q(x)} \der x &= \int_{Q_-} f(x) \der x.
\end{align}
\end{lemma}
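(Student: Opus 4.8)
The plan is to reduce both identities to the dominated convergence theorem after factoring out the appropriate power of $K$. First observe that, since $\gamma \in L^\infty_+(I)$ and $p$ is bounded between $p^-$ and $p^+$, the exponent $1/(p-1)$ ranges in a compact subinterval of $(0,\infty)$ and $\gamma^{-1}$ ranges in a compact subinterval of $(0,\infty)$; hence $f = \gamma^{-1/(p-1)}$ is bounded and bounded away from zero, so $f \in L^\infty(I) \subset L^1(I)$ because $I$ is bounded. Also $q = p/(p-1)$ is measurable and, by the very definition of essential supremum and infimum, $q^- \le q(x) \le q^+$ for almost every $x \in I$, while $Q_\pm$ are (up to null sets) the sets where $q$ attains $q^\pm$.

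For the first limit, write $K^{q(x)} = K^{q^+} K^{q(x)-q^+}$, so that for every $K > 0$
\begin{equation}
K^{-q^+} \int_a^b f(x) K^{q(x)} \der x = \int_a^b f(x) K^{q(x)-q^+} \der x.
\end{equation}
For almost every $x$ the exponent $q(x)-q^+$ is non-positive, so for all $K \ge 1$ one has $0 < K^{q(x)-q^+} \le 1$ and the integrand is dominated by $f \in L^1(I)$, uniformly in $K$. For fixed such $x$, as $K \to \infty$ the factor $K^{q(x)-q^+} = \expo\sulut{(q(x)-q^+)\log K}$ tends to $1$ if $q(x)=q^+$ and to $0$ if $q(x) < q^+$; therefore $f(x) K^{q(x)-q^+} \to f(x)\mathbf 1_{Q_+}(x)$ for almost every $x$. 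Applying the dominated convergence theorem along an arbitrary sequence $K_n \to \infty$ (and using the sequential characterization of the limit at infinity) gives
\begin{equation}
\lim_{K \to \infty} K^{-q^+} \int_a^b f(x) K^{q(x)} \der x = \int_a^b f(x) \mathbf 1_{Q_+}(x) \der x = \int_{Q_+} f(x) \der x.
\end{equation}

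The second limit is symmetric: write $K^{q(x)} = K^{q^-} K^{q(x)-q^-}$ with $q(x)-q^- \ge 0$ almost everywhere, so that for $0 < K \le 1$ the integrand $f(x) K^{q(x)-q^-}$ is again dominated by $f$, and for fixed $x$ it converges to $f(x)$ if $q(x)=q^-$ and to $0$ otherwise as $K \to 0$. Dominated convergence along any sequence $K_n \to 0^+$ then yields $\int_{Q_-} f(x)\der x$. I do not expect a genuine obstacle here; the only points requiring care are checking the domination, which uses only the sign of $q(x)-q^\pm$ valid a.e.\ by the definition of essential supremum/infimum, and noting that $K$ runs over the reals rather than over a sequence, which is handled by the sequential characterization of limits (and, if one prefers, by the monotonicity of $K \mapsto K^{q(x)-q^\pm}$ on $[1,\infty)$ and on $(0,1]$ respectively).
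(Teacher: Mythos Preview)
Your argument is correct and is essentially the same as the paper's: both factor out $K^{q^\pm}$ and apply dominated convergence to the integrand $f(x)K^{q(x)-q^\pm}$, the paper by splitting the integral over $Q_\pm$ and its complement, and you by writing the pointwise limit as $f\mathbf 1_{Q_\pm}$. Your version simply spells out the domination and the sequential limit more carefully.
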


\begin{proof}
For the maximum, by monotone (or dominated) convergence
\begin{equation}
\int_{I \setminus Q_+} K^{q(x)-q^+} f(x) \der x \to 0,
\end{equation}
since $q(x)-q^+ < 0$ on the set.
The integral over $Q_+$ gives what we claim in the lemma.
The argument for the minimum has precisely the same idea.
\end{proof}

Unfortunately, $K_m$ is not something we can recover from the measurements.
We define an auxiliary variable~$\ol K_m$, which corresponds to conductivity one and thus is characterized by the equation
\begin{equation}
m = \int_a^b \ol K^{1/(p(x)-1)}_m \der x
\end{equation}
and can be calculated without knowing the true conductivity.
We write as $c_m$ constants that satisfy the inequality
%
\begin{equation}
1/c_m \leq K_m/\ol K_m \leq c_m.
\end{equation}
%
Let $K_m^l$ correspond to the constant weight~$\max\sulut{1,\esssup_{x \in I}f(x)}$ and $K_m^u$ to $\min\sulut{1,\essinf_{x \in I}f(x)}$.
That is, we have
\begin{align}
m &= \int_a^b \max\sulut{1,\esssup_{x \in I}f(x)} \sulut{K_m^l}^{1/(p(x)-1)} \der x \text{ and}\\
m &= \int_a^b \min\sulut{1,\essinf_{x \in I}f(x)} \sulut{K_m^u}^{1/(p(x)-1)} \der x.
\end{align}
This implies $K_m^l \leq K_m \leq K_m^u$ and $K_m^l \leq \ol K_m \leq K_m^u$,
and thereby $K_m/ \ol{K_m}$ and its inverse are bounded by $K_m^u/K_m^l$.
To bound this we use a mean value theorem:
\begin{lemma}
\label{lemma:mvt}
Suppose $p \in L^{\infty}([a,b])$  and suppose $h \colon A \to \R$, with the range $[\essinf p, \esssup p] \subseteq A$, is a continuous function with
\begin{equation}
\int_a^b h \sulut{p(x)} \der x = 0.
\end{equation}
Then there exists a real number $p^* \in [\essinf p, \esssup p]$ such that
\begin{equation}
\int_a^b h(p^*) \der x = 0.
\end{equation}
\end{lemma}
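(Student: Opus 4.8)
The plan is to reduce the statement to the intermediate value theorem applied to $h$ on the compact interval $I_0 = [\essinf p, \esssup p]$. First I would record two elementary facts: the interval $[a,b]$ is nondegenerate, so $b-a>0$ and the integral over $[a,b]$ of a positive constant is positive; and, by the very definition of the essential infimum and supremum, $p(x) \in I_0$ for almost every $x \in [a,b]$. Since $I_0 \subseteq A$ and $h$ is continuous, the restriction of $h$ to $I_0$ is a continuous function on a compact interval, hence attains a minimum value $c^- = \min_{t \in I_0} h(t)$ and a maximum value $c^+ = \max_{t \in I_0} h(t)$.

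The next step is to show $c^- \leq 0 \leq c^+$. Suppose, for contradiction, that $c^- > 0$. Then $h\sulut{p(x)} \geq c^- > 0$ for almost every $x \in [a,b]$, since $p(x) \in I_0$ almost everywhere, and therefore
\begin{equation}
\int_a^b h\sulut{p(x)} \der x \geq c^- (b-a) > 0,
\end{equation}
contradicting the hypothesis that this integral vanishes. The symmetric argument, with the inequalities reversed, rules out $c^+ < 0$.

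Finally, $h$ is continuous on the interval $I_0$ and takes there a value that is $\leq 0$ and a value that is $\geq 0$; by the intermediate value theorem there is a point $p^* \in I_0 = [\essinf p, \esssup p]$ with $h(p^*) = 0$. Consequently
\begin{equation}
\int_a^b h(p^*) \der x = (b-a)\, h(p^*) = 0,
\end{equation}
which is the claim. (The degenerate case in which $I_0$ reduces to a single point is automatically covered: then $p$ is almost everywhere equal to that point and the hypothesis forces $h$ to vanish there.)

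There is no genuine obstacle here; the argument is entirely elementary. The only place deserving a word of care is the almost-everywhere containment $p(x) \in [\essinf p, \esssup p]$, which is exactly what allows one to pass from a pointwise lower bound on $h$ over $I_0$ to a lower bound on the integrand $h\sulut{p(x)}$, and this is immediate from the definitions of $\essinf$ and $\esssup$.
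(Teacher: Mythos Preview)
Your proof is correct and follows essentially the same approach as the paper: both argue that if $h$ had no zero on $[\essinf p,\esssup p]$ then by continuity it would be of one sign there, forcing the integral of $h(p(x))$ to be nonzero. Your version is slightly more explicit (extreme values, the a.e.\ containment $p(x)\in I_0$, the degenerate case), but the idea is identical.
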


\begin{proof}
If no $p^*$ with $h(p^*) = 0$ existed, then by continuity of $h$, $h(p)$ would be either positive for all $p \in [\essinf p, \esssup p]$, or negative for all of them.
This contradicts the assumption that the integral of $h(p(x))$ is zero.
\end{proof}

\begin{lemma}
\label{lemma:olK_K_bounded}
For all $m \in \R_+$ there exists $p^* \in [p^- , p^+]$ such that we can choose
\begin{equation}
c_m \leq \sulut{\frac{\max\sulut{1,\esssup_{x \in I}f(x)}}{\min\sulut{1,\essinf_{x \in I}f(x)}}}^{p^* -1}.
\end{equation}
\end{lemma}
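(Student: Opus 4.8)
The plan is to apply the mean value theorem of Lemma~\ref{lemma:mvt} to the difference of the two defining integral identities for $K^l_m$ and $K^u_m$. Throughout I would write $M = \max\sulut{1,\esssup_{x\in I}f(x)}$ and $\mu = \min\sulut{1,\essinf_{x\in I}f(x)}$, so that $\mu \le f(x) \le M$ almost everywhere and, crucially, $\mu \le 1 \le M$. First I would pin down the ordering of the auxiliary constants: for each fixed $x$ the map $K \mapsto K^{1/(p(x)-1)}$ is strictly increasing because $p^- > 1$, so comparing $m = \int_a^b M\sulut{K^l_m}^{1/(p(x)-1)}\der x = \int_a^b \mu\sulut{K^u_m}^{1/(p(x)-1)}\der x$ with the defining identity $m = \int_a^b f(x)K_m^{1/(p(x)-1)}\der x$ and with its analogue for $\ol K_m$ (which corresponds to $f \equiv 1$), together with the pointwise bounds $\mu \le f(x) \le M$ and $\mu \le 1 \le M$, forces $K^l_m \le K_m \le K^u_m$ and $K^l_m \le \ol K_m \le K^u_m$, and in particular $K^l_m \le K^u_m$. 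Hence both $K_m/\ol K_m$ and $\ol K_m/K_m$ are bounded by $K^u_m/K^l_m$, so $c_m = K^u_m/K^l_m$ is an admissible choice and it suffices to estimate this ratio.

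Next I would subtract the two displayed identities defining $K^l_m$ and $K^u_m$, obtaining
\begin{equation}
\int_a^b \sulut{ M\sulut{K^l_m}^{1/(p(x)-1)} - \mu\sulut{K^u_m}^{1/(p(x)-1)} }\der x = 0,
\end{equation}
and set $h(t) = M\sulut{K^l_m}^{1/(t-1)} - \mu\sulut{K^u_m}^{1/(t-1)}$. Since $p^- > 1$, this $h$ is continuous on any interval $A \supseteq [p^-,p^+] = [\essinf p, \esssup p]$, so Lemma~\ref{lemma:mvt} produces $p^* \in [p^-,p^+]$ with $\int_a^b h(p^*)\der x = 0$, which (the interval being nondegenerate) means $h(p^*) = 0$, i.e.\ $M\sulut{K^l_m}^{1/(p^*-1)} = \mu\sulut{K^u_m}^{1/(p^*-1)}$. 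Rearranging gives $K^u_m/K^l_m = \sulut{M/\mu}^{p^*-1}$, so the choice $c_m = \sulut{M/\mu}^{p^*-1}$ is admissible and the inequality of the lemma holds (with equality, in fact).

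The argument is essentially mechanical once the correct difference of integrals is isolated, and I do not expect a genuine obstacle. The only points requiring care are the bookkeeping around $\mu \le 1 \le M$ — which is what makes $c_m = K^u_m/K^l_m$ simultaneously an upper bound for $K_m/\ol K_m$ and for $\ol K_m/K_m$ — and checking that $h$ is continuous on a neighbourhood of $[p^-,p^+]$ (immediate from $p^- > 1$) so that Lemma~\ref{lemma:mvt} genuinely applies.
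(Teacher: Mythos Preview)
Your proof is correct and follows essentially the same route as the paper: apply Lemma~\ref{lemma:mvt} to the difference of the defining identities for $K_m^l$ and $K_m^u$, obtain $p^*$ with $M\sulut{K_m^l}^{1/(p^*-1)} = \mu\sulut{K_m^u}^{1/(p^*-1)}$, and rearrange. Your write-up is in fact more explicit than the paper's, since you spell out why $K_m^l \le K_m,\ol K_m \le K_m^u$ (the paper records this just before the lemma) and note that the bound is actually attained as an equality.
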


\begin{proof}
We define $h$ by
\begin{equation}
h(p) = \sulut{K_m^u}^{1/(p-1)}\min\sulut{1,\essinf_{x \in I}f(x)} - \sulut{K_m^l}^{1/(p-1)}\max\sulut{1,\esssup_{x \in I}f(x)}
\end{equation}
and use the mean value lemma (lemma~\ref{lemma:mvt}).
The claim follows from
\begin{equation}
\begin{split}
&\int_a^b \max\sulut{1,\esssup_{x \in I}f(x)} \sulut{K_m^l}^{1/(p^*-1)} \der x  \\
= &\int_a^b \min\sulut{1,\essinf_{x \in I}f(x)} \sulut{K_m^u}^{1/(p^*-1)} \der x.
\end{split}
\end{equation}
\end{proof}

We have thus established that the ratio~$K_m/\ol K_m$ is bounded uniformly in $m$, since $p^*$ is bounded.
We use this information to determine the limit of the ratio as $m^{\pm1}\to \infty$.

\begin{lemma}
\label{lemma:lim_olK_K}
Suppose $Q_+$ (respectively $Q_-$) has positive measure.
Then
\begin{align}
\lim_{m \to \infty} \sulut{\frac{\ol K_m}{K_m}}^{1/(p^- -1)} & = \mint{-}_{Q^+} f(x) \der x \\
\lim_{m \to 0} \sulut{\frac{\ol K_m}{K_m}}^{1/(p^+ -1)} & = \mint{-}_{Q^-} f(x) \der x.
\end{align}
\end{lemma}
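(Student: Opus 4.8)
The plan is to read both limits off Lemma~\ref{lemma:K_DN_lim}, applied once with the weight~$f$ and once with the weight identically~$1$, and then to divide the two resulting asymptotics so that the common value~$m$ cancels.

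First I would record the defining relations
\begin{equation}
m = \int_a^b K_m^{1/(p(x)-1)} f(x) \der x
\qquad\text{and}\qquad
m = \int_a^b \ol K_m^{1/(p(x)-1)} \der x ,
\end{equation}
the first being \eqref{eq:K_dirichlet} rewritten through $f(x)=\sulut{\gamma(x)}^{-1/(p(x)-1)}$ and the second the definition of~$\ol K_m$. Since $\frac{1}{p(x)-1}=q(x)-1$, and since multiplying the integrand in Lemma~\ref{lemma:K_DN_lim} by $K^{-1}$ and the prefactor by $K$ leaves the statement unchanged, that lemma is equivalent to: for every bounded nonnegative weight~$g$,
\begin{align}
\lim_{K\to\infty} K^{-(q^+-1)}\int_a^b K^{1/(p(x)-1)} g(x)\der x &= \int_{Q_+} g(x)\der x, \\
\lim_{K\to 0} K^{-(q^--1)}\int_a^b K^{1/(p(x)-1)} g(x)\der x &= \int_{Q_-} g(x)\der x
\end{align}
(the proof of Lemma~\ref{lemma:K_DN_lim} uses only integrability of the weight, which $f$ and $1$ both have on the bounded interval). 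Next I would observe that Lemma~\ref{lemma:m2K_bijection} applies to $m\mapsto K_m$ and, being the same construction with $\gamma\equiv 1$, also to $m\mapsto\ol K_m$; hence both are strictly increasing continuous bijections of $\R_+$, so $K_m,\ol K_m\to\infty$ as $m\to\infty$ and $K_m,\ol K_m\to 0$ as $m\to 0$. Substituting $K=K_m$ with $g=f$, and $K=\ol K_m$ with $g\equiv 1$, into the displays above and using the defining relations gives $K_m^{-(q^+-1)}m\to\int_{Q_+}f\der x$ and $\ol K_m^{-(q^+-1)}m\to\abs{Q_+}$ as $m\to\infty$, and likewise $K_m^{-(q^--1)}m\to\int_{Q_-}f\der x$ and $\ol K_m^{-(q^--1)}m\to\abs{Q_-}$ as $m\to 0$. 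Because $\gamma\in L^\infty_+(I)$ and $1<p^-\le p\le p^+<\infty$, the weight~$f$ lies between two positive constants, so each of these limits is a positive real number; dividing the first relation by the second and the third by the fourth cancels~$m$ and yields $\sulut{\ol K_m/K_m}^{q^+-1}\to\mint{-}_{Q_+}f\der x$ as $m\to\infty$ and $\sulut{\ol K_m/K_m}^{q^--1}\to\mint{-}_{Q_-}f\der x$ as $m\to 0$. The proof finishes with the elementary identities $q^+-1=p^-/(p^--1)-1=1/(p^--1)$ and $q^--1=1/(p^+-1)$.

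I do not expect a genuine obstacle here: the argument is essentially bookkeeping with exponents. The two points that need a word of care are (i) that one is allowed to insert the conductivity-dependent quantities $K_m$ and $\ol K_m$ into the ``free-$K$'' statement of Lemma~\ref{lemma:K_DN_lim}, which is exactly what the surjectivity and monotonicity of Lemma~\ref{lemma:m2K_bijection} guarantee since $m$ sweeps out all of $\R_+$; and (ii) that the limits $\int_{Q_\pm}f\der x$ and $\abs{Q_\pm}$ are both strictly positive and finite, so that forming their ratio and then taking the reciprocal power is legitimate --- here the two-sided bound on~$f$ coming from $\gamma\in L^\infty_+(I)$ is used. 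The uniform bound on $K_m/\ol K_m$ established just before the lemma is, with this argument, not actually needed.
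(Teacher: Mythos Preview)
Your argument is correct. Both proofs rest on the same observation: the asymptotics of Lemma~\ref{lemma:K_DN_lim}, used once with weight~$f$ at $K=K_m$ and once with weight~$1$ at $K=\ol K_m$, together with the defining relations $m=\int_I f\,K_m^{1/(p-1)}=\int_I \ol K_m^{1/(p-1)}$. The paper carries this out by chaining the two asymptotics through the common value~$m$ into a single equation
\[
\textstyle \int_{Q_+} f \cdot K_m^{1/(p^--1)} = |Q_+|\,\ol K_m^{1/(p^--1)} + o\bigl(K_m^{1/(p^--1)}+\ol K_m^{1/(p^--1)}\bigr),
\]
and then divides by $K_m^{1/(p^--1)}$; to kill the combined error term it must invoke the uniform bound on $\ol K_m/K_m$ from Lemma~\ref{lemma:olK_K_bounded}. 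You instead extract two separate nonzero limits $K_m^{-(q^+-1)}m\to\int_{Q_+}f$ and $\ol K_m^{-(q^+-1)}m\to|Q_+|$ and take their quotient. This is cleaner: the positivity of both limits (which you correctly justify via $f\in L^\infty_+$ and $|Q_+|>0$) makes the division legitimate, and Lemma~\ref{lemma:olK_K_bounded} becomes unnecessary. Your two ``points of care'' are exactly the right ones, and both are handled.
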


\begin{proof}
First we prove
\begin{align}
\int_{Q_+} f(x) \der x K_m^{1/(p^- -1)} &= \abs{Q_+} \ol K_m^{1/(p^- -1)} + o_{m \to \infty}\sulut{K^{1/(p^- -1)}_m + \ol K^{1/(p^- -1)}_m}
\end{align}
and
\begin{align}
\int_{Q_-} f(x) \der x K_m^{1/(p^+ -1)} &= \abs{Q_-} \ol K_m^{1/(p^+ -1)} + o_{m \to 0}\sulut{K^{1/(p^+ -1)}_m + \ol K^{1/(p^+ -1)}_m}.
\end{align}
For $m \to \infty$ we calculate
\begin{equation}
\begin{split}
\int_{Q_+} f(x)  \der x & K_m^{1/(p^- -1)} = \int_{Q_+} f(x)  K_m^{1/(p(x) -1)} \der x \\
&= \int_a^b f(x)  K_m^{1/(p(x) -1)} \der x + o_{m \to \infty}\sulut{K^{1/(p^- -1)}_m} \\
&= \int_a^b \ol K_m^{1/(p(x) -1)} \der x + o_{m \to \infty}\sulut{K^{1/(p^- -1)}_m} \\
&= \abs{Q_+} \ol K_m^{1/(p^- -1)} + o_{m \to \infty}\sulut{K^{1/(p^- -1)}_m + \ol K^{1/(p^- -1)}_m}.
\end{split}
\end{equation}

Next we divide by $\abs{Q_+}K_m^{1/(p^- -1)}$ and use 
lemma~\ref{lemma:olK_K_bounded} to get the claim.
The argument when $m \to 0$ is essentially the same.
\end{proof}

Theorem~\ref{thm:pos_meas} directly follows from the following proposition.
\begin{proposition}
Suppose $Q_+$ or $Q_-$ is of positive measure.
Then
\begin{align}
\lim_{m \to \infty} \ol K_m^{-q^+} \frac{1}{\abs{Q_+}} \Lambda_\gamma(m) & = \sulut{\mint{-}_{Q_+}f(x) \der x}^{-\sulut{p^- -1}} \text{ or} \\
\lim_{m \to 0} \ol K_m^{-q^-} \frac{1}{\abs{Q_-}} \Lambda_\gamma(m) & = \sulut{\mint{-}_{Q_-}f(x) \der x}^{-\sulut{p^+ -1}}.
\end{align}
\end{proposition}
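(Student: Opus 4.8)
The plan is to assemble the lemmas already proved. By Lemma~\ref{lemma:DN-px} we have $\Lambda_\gamma(m) = \int_a^b f(x)\,K_m^{q(x)}\der x$, where $f = \gamma^{-1/(p-1)} \in L^1(I)$ (since $\gamma$ is bounded below by a positive constant and $p$ is bounded) and $q = p/(p-1)$. Since $t \mapsto t/(t-1)$ is strictly decreasing on $]1,\infty[$, the essential supremum $q^+$ of $q$ is attained exactly on the set $Q_+$ where $p$ attains its essential infimum $p^-$, so $q^+ = p^-/(p^- -1)$; symmetrically $q^- = p^+/(p^+ -1)$ and $Q_-$ is the set where $p = p^+$. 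Finally, by Lemma~\ref{lemma:m2K_bijection} the map $m \mapsto K_m$ is a strictly increasing bijection of $\R_+$ onto itself, so $K_m \to \infty$ as $m \to \infty$ and $K_m \to 0$ as $m \to 0$.

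Assume first that $\abs{Q_+} > 0$ and consider the $m \to \infty$ limit. I would factor
\begin{equation}
\ol K_m^{-q^+}\, \Lambda_\gamma(m) = \sulut{\frac{\ol K_m}{K_m}}^{-q^+} \cdot K_m^{-q^+}\int_a^b f(x)\, K_m^{q(x)} \der x .
\end{equation}
Because $K_m \to \infty$, Lemma~\ref{lemma:K_DN_lim} applied with $K = K_m$ shows that the second factor tends to $\int_{Q_+} f(x)\der x$. For the first factor, Lemma~\ref{lemma:lim_olK_K} gives $\sulut{\ol K_m/K_m}^{1/(p^- -1)} \to c := \mint{-}_{Q_+} f(x)\der x$; combined with the uniform two-sided bound on $K_m/\ol K_m$ from Lemmas~\ref{lemma:olK_K_bounded} (which is what allows one to pass from convergence of the $1/(p^- -1)$-th power to convergence of the ratio itself, and of any other fixed power of it), this yields $\ol K_m/K_m \to c^{\,p^- -1}$ and hence
\begin{equation}
\sulut{\frac{\ol K_m}{K_m}}^{-q^+} \to c^{-(p^- -1)q^+} = c^{-p^-}.
\end{equation}

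Multiplying the two limits and dividing by $\abs{Q_+}$ gives
\begin{equation}
\frac{1}{\abs{Q_+}}\,\ol K_m^{-q^+}\,\Lambda_\gamma(m) \longrightarrow \frac{1}{\abs{Q_+}}\, c^{-p^-} \int_{Q_+} f(x)\der x = c^{-p^-}\cdot c = c^{-(p^- -1)} = \sulut{\mint{-}_{Q_+} f(x)\der x}^{-(p^- -1)},
\end{equation}
which is the first displayed identity. The $m \to 0$ statement is obtained in exactly the same way, using the $m \to 0$ halves of Lemmas~\ref{lemma:K_DN_lim} and~\ref{lemma:lim_olK_K} with $Q_-$, $p^+$, $q^-$ in place of $Q_+$, $p^-$, $q^+$, and noting that now $K_m \to 0$. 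Since the hypothesis of the proposition guarantees that at least one of $Q_+$, $Q_-$ has positive measure, at least one of the two conclusions holds, which is precisely the ``or'' in the statement; recalling $f(x) = \sulut{\gamma(x)}^{-1/(p(x)-1)}$ then yields Theorem~\ref{thm:pos_meas}. There is no serious obstacle here: the argument is bookkeeping on the exponents together with the cited lemmas, and the only point needing a moment of care is the passage from $\sulut{\ol K_m/K_m}^{1/(p^- -1)} \to c$ to $\sulut{\ol K_m/K_m}^{-q^+} \to c^{-p^-}$, which is legitimate because the ratio $K_m/\ol K_m$ stays in a compact subset of $]0,\infty[$ by Lemma~\ref{lemma:olK_K_bounded}.
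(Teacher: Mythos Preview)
Your proof is correct and follows the same route as the paper: factor $\ol K_m^{-q^+}\Lambda_\gamma(m)$ as $(\ol K_m/K_m)^{-q^+}\cdot K_m^{-q^+}\int_a^b f K_m^{q}$, apply Lemma~\ref{lemma:K_DN_lim} to the second factor and Lemma~\ref{lemma:lim_olK_K} to the first, then combine the exponents using $q^+ = p^-/(p^- -1)$. Your additional remark about the ratio $K_m/\ol K_m$ staying in a compact subset of $]0,\infty[$ is harmless but not strictly needed, since $c=\mint{-}_{Q_+} f>0$ already makes $t\mapsto t^{-p^-}$ continuous at the limit.
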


\begin{proof}
\begin{equation}
\begin{split}
& \ol K_m^{-p^-/(p^- - 1)} \frac{1}{\abs{Q_+}} \Lambda_\gamma(m) \\
&= \sulut{\frac{\ol K_m}{K_m}}^{-q^+}K^{-q^+}_m \frac{1}{\abs{Q_+}} \int_a^b f(x)K^{q(x)}_m \der x.
\end{split}
\end{equation}
By lemma~\ref{lemma:K_DN_lim} we have, as $m \to \infty$,
\begin{equation}
K^{-q^+}_m \frac{1}{\abs{Q_+}} \int_a^b f(x)K^{q(x)}_m \der x \to \mint{-}_{Q^+} f(x) \der x.
\end{equation}
The proof then follows from lemma~\ref{lemma:lim_olK_K}.
\end{proof}

\subsection{Characterization of recognizable functions}
\label{sec:characterization}

In this section we first characterize the space of functions with which we pair the unknown function $f$ in the DN~map
\begin{equation}
    m \mapsto \int_I f(x) K_m^{q(x)} \der x.
\end{equation}
We write $k_m = \log \sulut{K_m}$ and consider the set of functions
\begin{equation}
    S = \joukko{ \exp{\sulut{k_m q(x)}} ; m\in \R }. 
\end{equation}

Since $S$ is closed under pointwise multiplication, any product of linear combinations of elements from $S$ remains a linear combination of elements from $S$.
In particular, the space~$\Span\sulut{S}$ is an algebra.

In the special case that $q$ is continuous and injective, the Stone-Weierstrass theorem implies that $\Span\sulut{S}$ is dense in $C(I)$ with the usual topology of uniform convergence. Since the continuous functions are dense and continuously embedded in $L^2(I)$ (the interval~$I$ is bounded), $\Span\sulut{S}$ must also be dense in $L^2(I)$ by approximating in $L^2$-norm and choosing a diagonal sequence from $\Span \sulut{S}$. This result we shall now generalize to the setting where $q$ is merely measurable by using the multiplicative system theorem~\cite[theorem~21]{Dellacherie:Meyer:1978}.
\begin{theorem}[Multiplicative system theorem]
\label{thm:multisystem}
Suppose $H$ is a vector space of real-valued bounded measurable functions on a measurable space $X$.
Suppose $H$ contains constant functions and is closed under the pointwise convergence of uniformly bounded increasing sequences of functions.
Let $M \subseteq H$ be closed under pointwise multiplication, and let $\mathcal{G}$ be the $\sigma$-algebra generated by $M$.

Then $H$ contains all bounded $\mathcal{G}$-measurable functions.
\end{theorem}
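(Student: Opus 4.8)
The plan is to run the standard functional monotone class argument: using Dynkin's $\pi$--$\lambda$ theorem, reduce from arbitrary bounded $\mathcal{G}$-measurable functions to indicators of sets in a $\pi$-system that generates $\mathcal{G}$, and build those indicators out of $M$. The first preparatory step is to observe that $H$ is stable under uniform limits. Indeed, a vector space of bounded functions containing the constants and stable under bounded increasing pointwise limits is automatically stable under uniform limits: if $f_n\in H$ with $\norm{f_n-f}_\infty\to 0$, pass to a subsequence with $\norm{f_{n_k}-f}_\infty\le\eps_k$ and $\eps_{k+1}\le\eps_k/3$; then $g_k:=f_{n_k}-2\eps_k$ lies in $H$, is uniformly bounded, increases in $k$ (since pointwise $f_{n_{k+1}}-f_{n_k}\ge-\eps_{k+1}-\eps_k$, so $g_{k+1}-g_k\ge\eps_k-3\eps_{k+1}\ge0$), and converges pointwise to $f$, whence $f\in H$.

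Next, let $M'$ be the smallest algebra of functions containing $M$ and the constants. Since $M$ is stable under pointwise products and $H$ is a vector space containing $M$ and the constants, every product of elements of $M$ stays in $M\subseteq H$ and every linear combination stays in $H$, so $M'\subseteq H$; hence the uniform closure $\overline{M'}$ — which is again an algebra of bounded functions — satisfies $\overline{M'}\subseteq H$ by the previous step. Consequently, for every $g\in M$ and every continuous $\psi\colon\R\to\R$ we get $\psi(g)\in\overline{M'}$: the range of $g$ has compact closure (as $g$ is bounded), on which $\psi$ is a uniform limit of polynomials $P$, and each $P(g)$ belongs to $M'$.

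Now fix $g_1,\dots,g_n\in M$ and reals $a_1,\dots,a_n$, and, for each threshold, pick continuous $\psi_j\colon\R\to[0,1]$ increasing pointwise to $\mathbf 1_{(a_i,\infty)}$ as $j\to\infty$. Then $\prod_{i=1}^n\psi_j(g_i)\in\overline{M'}\subseteq H$ — here it is essential that $\overline{M'}$ is an algebra, since $H$ itself need not be closed under products — and this product is nonnegative and increasing in $j$, converging pointwise to $\mathbf 1_A$ with $A=\bigcap_{i=1}^n\{g_i>a_i\}$; by stability under bounded increasing limits, $\mathbf 1_A\in H$. The family $\mathcal C$ of all such sets $A$, together with $X$, is a $\pi$-system, and since the sets $\{g>a\}$ for $g\in M$, $a\in\R$, generate $\mathcal G$, we have $\sigma(\mathcal C)=\mathcal G$.

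Finally, set $\mathcal D:=\{A:\mathbf 1_A\in H\}$. It contains $X$ (constants are in $H$), is closed under proper differences (because $\mathbf 1_{B\setminus A}=\mathbf 1_B-\mathbf 1_A$ and $H$ is a vector space), and is closed under increasing unions (because $\mathbf 1_{A_n}\uparrow\mathbf 1_{\bigcup_nA_n}$ with uniform bound $1$); so $\mathcal D$ is a $\lambda$-system containing the $\pi$-system $\mathcal C$, and Dynkin's lemma yields $\mathcal G=\sigma(\mathcal C)\subseteq\mathcal D$. Thus $\mathbf 1_A\in H$ for all $A\in\mathcal G$, and by linearity $H$ contains every simple $\mathcal G$-measurable function; for a bounded $\mathcal G$-measurable $\phi$, the function $\phi+\norm{\phi}_\infty$ is a uniformly bounded increasing limit of nonnegative simple $\mathcal G$-measurable functions, hence lies in $H$, and subtracting the constant gives $\phi\in H$. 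The two points that require care are the uniform-closure lemma of the first step and the observation of the third step that, although $H$ need not be an algebra, all of the multiplication can be performed inside the genuine algebra $\overline{M'}$ before the monotone limit is taken; this interlocking of the hypotheses is the heart of the argument.
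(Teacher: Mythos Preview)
The paper does not give its own proof of this statement: theorem~\ref{thm:multisystem} is quoted verbatim from Dellacherie and Meyer~\cite[theorem~21]{Dellacherie:Meyer:1978} and used as a black box in the proof of lemma~\ref{lemma:span}. So there is nothing in the paper to compare your argument against.

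That said, your proof is correct and is essentially the standard one. The uniform-closure lemma in your first paragraph is right (the arithmetic $g_{k+1}-g_k\ge \eps_k-3\eps_{k+1}\ge 0$ checks out), the reduction to the genuine algebra $\overline{M'}$ is the key manoeuvre that lets you form the products $\prod_i\psi_j(g_i)$ without assuming $H$ itself is multiplicative, and the Dynkin step and passage from indicators to bounded $\mathcal G$-measurable functions are routine. One small remark: when you write ``$P(g)\in M'$'' you are implicitly using that $g^n\in M$ for all $n$ because $M$ is closed under multiplication, so polynomials in a single $g$ already lie in $\Span(M\cup\{1\})$; this is fine, but worth making explicit.
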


Suppose $r > 1$ is a positive real number and $s > 1$ is its Hölder conjugate.
Note that, when $p$ and $q$ are Hölder conjugates,
$\sigma(p) = \sigma(q)$,
since
the map taking $p$ to $q$ is a homeomorphism from $]1,\infty[$ to itself. Hence $p$ and $q$ generate the same $\sigma$-algebra.

\begin{lemma}
\label{lemma:span}
\begin{equation}
\overline{\Span(S)}^{L^r(I)} = L^r\sulut{I, \sigma(p)}.
\end{equation}
\end{lemma}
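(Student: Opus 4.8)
The plan is to apply the multiplicative system theorem (Theorem~\ref{thm:multisystem}) with $X = I$, $M = S$, and $H = \overline{\Span(S)}^{L^r(I)}$, after first making two adjustments so that the hypotheses are literally satisfied. First, since $K_m$ ranges over all of $\R_+$ (by Lemma~\ref{lemma:m2K_bijection}), $k_m = \log K_m$ ranges over all of $\R$, so the set $S = \joukko{\exp(k_m q(x)) ; m \in \R}$ coincides with $\joukko{K^{q(x)} ; K > 0}$; taking $K = 1$ shows the constant function $1$ lies in $\Span(S)$. Second, $S$ consists of bounded measurable functions (each $K^{q(x)}$ is bounded since $q$ is bounded between $q^- = p^+/(p^+-1)$ and $q^+ = p^-/(p^--1)$), and $I$ is bounded, so $S \subseteq L^\infty(I) \subseteq L^r(I)$ and $\Span(S)$ is a legitimate subspace.

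The subtlety is that the multiplicative system theorem is stated for a space of \emph{bounded} functions closed under pointwise limits of uniformly bounded increasing sequences, whereas $H = \overline{\Span(S)}^{L^r(I)}$ contains unbounded functions and is closed under $L^r$-limits, not monotone pointwise limits. I would handle this by working instead with $\widetilde H = H \cap L^\infty(I)$, the bounded elements of the closure. One checks: (i) $\widetilde H$ contains constants and $M = S \subseteq \widetilde H$; (ii) $M$ is closed under pointwise multiplication, since $K_1^{q(x)} K_2^{q(x)} = (K_1 K_2)^{q(x)} \in S$, so $\Span(S)$ — hence its $L^r$-closure, hence $\widetilde H$ — is an algebra; (iii) $\widetilde H$ is closed under pointwise limits of uniformly bounded increasing sequences, because such limits converge in $L^r(I)$ by dominated convergence (using $|I| < \infty$) and the limit is bounded. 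The $\sigma$-algebra generated by $M = S$ is the $\sigma$-algebra generated by the single function $x \mapsto K^{q(x)}$ for any fixed $K \neq 1$ (together with the others), which equals $\sigma(q) = \sigma(p)$: indeed $x \mapsto K^{q(x)}$ is a composition of the continuous injection $t \mapsto K^t$ with $q$, so it generates the same $\sigma$-algebra as $q$, and $\sigma(p) = \sigma(q)$ was noted above. Theorem~\ref{thm:multisystem} then gives that $\widetilde H$ contains every bounded $\sigma(p)$-measurable function, i.e.\ $L^\infty(I, \sigma(p)) \subseteq \overline{\Span(S)}^{L^r(I)}$.

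To finish, I would upgrade from bounded to all of $L^r(I,\sigma(p))$ and prove the reverse inclusion. For the forward inclusion: given $f \in L^r(I,\sigma(p))$, the truncations $f_N = (f \wedge N) \vee (-N)$ are bounded and $\sigma(p)$-measurable, hence lie in $\overline{\Span(S)}^{L^r(I)}$ by the previous paragraph, and $f_N \to f$ in $L^r(I)$ by dominated convergence; since the closure is itself closed, $f \in \overline{\Span(S)}^{L^r(I)}$. For the reverse inclusion $\overline{\Span(S)}^{L^r(I)} \subseteq L^r(I,\sigma(p))$: every element of $S$ is $\sigma(p)$-measurable (being a continuous function of $q$), so $\Span(S) \subseteq L^r(I,\sigma(p))$, and $L^r(I,\sigma(p))$ — regarded as a closed subspace of $L^r(I)$ via the isometry described in the introduction — is closed, so it contains the $L^r$-closure of $\Span(S)$.

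The main obstacle I anticipate is purely bookkeeping: the multiplicative system theorem is about bounded functions and monotone pointwise convergence, while the statement of the lemma is about $L^r$-closures of possibly-unbounded functions; bridging this gap cleanly via the truncation argument above (and checking that $\widetilde H$ genuinely satisfies the monotone-class hypothesis, which relies crucially on $|I| < \infty$ so that uniformly bounded convergence implies $L^r$ convergence) is where care is needed. A minor secondary point is verifying that the $\sigma$-algebra generated by $S$ is exactly $\sigma(p)$ and not something coarser — but this follows because $t \mapsto K^t$ is a homeomorphism of $\R$ (for fixed $K \neq 1$) onto $\R_+$, so no information in $q$ is lost.
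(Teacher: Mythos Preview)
Your proposal is correct and follows essentially the same route as the paper: apply the multiplicative system theorem with $M$ the multiplicative set $S$ (the paper uses $\Span(S)$, but either works) and $H$ the bounded part of the $L^r$-closure, verify the monotone-class hypothesis via dominated convergence on the bounded interval, then pass to general $L^r(I,\sigma(p))$ by truncation; the easy inclusion is handled by closedness of $L^r(I,\sigma(p))$ in $L^r(I)$, which the paper does via an a.e.-convergent subsequence but amounts to the same thing. One small caution: your aside that ``$\Span(S)$ --- hence its $L^r$-closure, hence $\widetilde H$ --- is an algebra'' is not obviously true and in any case unnecessary (only $M$ need be multiplicatively closed), and you should be slightly more careful that the multiplicative system theorem speaks of genuine functions rather than $L^r$-equivalence classes, as the paper is; but neither point affects the argument.
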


The proof follows a proof of Nathaniel Eldredge~\cite{Eldredge:2018} for a similar lemma.
We omit the space $L^r$ from the notation of the closure.
\begin{proof}
The $\sigma$-algebra generated by $\Span\sulut{S}$ is exactly the sigma-algebra $\sigma(p)$.
One consequence is that $\Span\sulut{S}$ is a subspace of $L^r\sulut{I, \sigma(p)}$.

First, we show $\ol{\Span\sulut{S}} \subseteq L^r\sulut{I, \sigma(p)}$.
Pick an element in the $L^r$-closure. Then there is a Cauchy sequence in $\Span(S)$ with this point as its limit. Any Cauchy sequence in $L^r(I)$ has an almost everywhere convergent subsequence~\cite[theorems 3.11 and 3.12]{Rudin:2006}, and since these functions are measurable with respect to $\sigma(p)$, so is the limit. This gives the first inclusion.

The reverse inclusion will follow from the multiplicative systems theorem.
Define $M=\Span\sulut{S}$ and let $H$ consist of all bounded measurable functions belonging to the equivalence classes of functions inside $\ol{\Span\sulut{S}} \cap L^\infty(I)$. We note that $M\subset H$ is closed under pointwise multiplication. 

\begin{enumerate}

    \item $H$ contains constant functions, since $1\in S \subset H$ by taking $k_m = 0$.
    
    \item $H$ is closed under pointwise convergence of uniformly bounded increasing sequences. To see this, take such a sequence $(f_j)_{j=1}^\infty$ in $H$ converging pointwise to a measurable function $f$. Then $f$ is bounded and $|f-f_j|^r$ converges pointwise to zero. Uniform boundedness and the dominated convergence theorem together imply that $f_j \to f$ in the $L^r$-norm as $j \to \infty$. Thus the equivalence class of $f$ is in $\ol{\Span\sulut{S}}$. Hence $f\in H$.
    
\end{enumerate}

Theorem~\ref{thm:multisystem} ensures $H$ contains all bounded $\sigma(p)$-measurable functions. By this construction, $L^r\sulut{I, \sigma(p)} \cap L^\infty\sulut{I, \sigma(p)} \subset \ol{\Span\sulut{S}}\cap L^\infty\sulut{I}$ holds.
Note that we here use that the sigma-algebra generated by $M$ is actually $\sigma(p)$.
Take $h \in L^r\sulut{I,\sigma(p)}$ and let $h$ also signify a $\sigma(p)$-measurable representative. 
Construct a sequence $(h_j)_{j=1}^\infty$ by setting
\begin{equation}
h_j = \max\joukko{-j,\min\joukko{h,j}}.
\end{equation}
These are bounded and $\sigma(p)$-measurable. By the above, they belong to $H$.
The dominated convergence theorem implies $h_j \to h$ as $j \to \infty$ in the $L^r$-norm. Since each $h_j$ is in some equivalence class of $\ol{\Span\sulut{S}}$, then $h \in \ol{\Span\sulut{S}}$ also. This gives the reverse inclusion.
\end{proof}

Above, we demonstrated that the functions $\exp(k_m q)$, spanning a dense subspace of $L^r(I,\sigma(p))$, suffice to determine $f\in L^s(I,\sigma(p))$ uniquely. Hence $f$ can, in principle, be recovered from measurements of the DN~map across all $m\in \mathbb{R}$, provided that it belongs to this space. In general it belongs to the space $L^\infty(I)$, as $\gamma$ and $p$ are bounded, and thereby to all the $L^r$ spaces.

When $q$ is continuous and increasing, we are unable to recover $f$ in sets where $q$ is constant. This is because $K_m^{q\sulut{\cdot}}$ restricted to any such set remains constant upon varying $m\in \mathbb{R}$, so testing against these yields no insight beyond the average of $f$ inside such a set. On the other hand, if $q$ is even on an interval symmetric about the origin, then we can only hope to determine $f$ up to its even part, because all $\exp(k_m q)$ are even in this case.

In abstract language, what we have determined is the projection of $f \in L^\infty(I) \subset L^2(I)$ onto the (complete and therefore closed) subspace $L^2(I, \sigma(p))$. This projection can also be understood in terms of the conditional expectation $\E(f \, | \, \sigma(p))$ given $\sigma(p)$ as the algebra of observable events~\cite[theorem 3.2.6]{Bobrowski:2005}. 

Let $P \colon L^2(I) \to L^2(I, \sigma(p))$ be the orthogonal projection onto $L^2(I, \sigma(p))$. We formulate the above as a statement about the reconstructibility of $Pf$ from a collection of measurements of the DN~map:

\begin{proposition} \label{prop:characterization}
There exists an orthonormal sequence $\{ s_n \}_{n=1}^\infty \subset \Span\sulut{S}$ determining the projection $Pf = \sum_{n=1}^\infty \langle f, s_n \rangle s_n$ for any $f\in L^2(I)$.
\end{proposition}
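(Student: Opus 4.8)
The plan is to manufacture an orthonormal basis of the closed subspace $L^2\sulut{I,\sigma(p)}$ consisting of elements of $\Span\sulut{S}$, and then read off the expansion of $Pf$ from the standard theory of Fourier series in a separable Hilbert space. The crucial input is already in hand: lemma~\ref{lemma:span} with $r=2$ says that $\Span\sulut{S}$ is dense in $L^2\sulut{I,\sigma(p)}$, which (identified with a closed subspace of $L^2(I)$) is itself a Hilbert space, and $P$ is the orthogonal projection onto it.

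First I would extract a countable dense subfamily from $\Span\sulut{S}$. Since $I$ is a bounded interval with Lebesgue measure, $L^2(I)$ is separable, hence so is its closed subspace $L^2\sulut{I,\sigma(p)}$; fix a countable set $\joukko{d_k}_{k=1}^\infty$ dense in it. By lemma~\ref{lemma:span}, for each pair $(k,j)$ of positive integers one may choose $g_{k,j}\in\Span\sulut{S}$ with $\norm{d_k-g_{k,j}}_{L^2(I)}<1/j$. Re-indexing the doubly indexed family as $\joukko{g_n}_{n=1}^\infty$ gives a countable subset of $\Span\sulut{S}$ that is still dense in $L^2\sulut{I,\sigma(p)}$.

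Next I would apply the Gram--Schmidt procedure to $\joukko{g_n}_{n=1}^\infty$, discarding any $g_n$ lying in the linear span of its predecessors and normalizing the remaining ones. This produces an orthonormal sequence $\joukko{s_n}$ with $\Span\joukko{s_1,\dots,s_N}=\Span\joukko{g_1,\dots,g_N}$ at every stage, so $\ol{\Span\joukko{s_n}}=\ol{\Span\joukko{g_n}}=L^2\sulut{I,\sigma(p)}$; thus $\joukko{s_n}$ is an orthonormal basis of $L^2\sulut{I,\sigma(p)}$. Since $\Span\sulut{S}$ is a real vector space and each $s_n$ is a finite real-linear combination of $g_1,\dots,g_n\in\Span\sulut{S}$, indeed $s_n\in\Span\sulut{S}$. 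Now for any $f\in L^2(I)$ the projection theorem gives $f-Pf\perp L^2\sulut{I,\sigma(p)}$, so $\ip{f}{s_n}=\ip{Pf}{s_n}$ for all $n$, and since $Pf$ lies in $L^2\sulut{I,\sigma(p)}$ with $\joukko{s_n}$ an orthonormal basis thereof, the Fourier expansion yields
\begin{equation}
Pf=\sum_{n=1}^\infty\ip{Pf}{s_n}s_n=\sum_{n=1}^\infty\ip{f}{s_n}s_n,
\end{equation}
with convergence in $L^2(I)$, which is the asserted identity. To connect this with the DN~map, recall from lemma~\ref{lemma:DN-px} that with $f=\gamma^{-1/(p-1)}$ one has $\int_I f(x)K_m^{q(x)}\der x=\Lambda_\gamma(m)$; as each $s_n$ is a finite linear combination of elements of $S$, i.e.\ of functions $x\mapsto K_m^{q(x)}$, every coefficient $\ip{f}{s_n}$ is a finite linear combination of values $\Lambda_\gamma(m)$ and hence recoverable from the DN~map.

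I do not anticipate a genuine obstacle beyond lemma~\ref{lemma:span}; the only subtlety is that $\Span\sulut{S}$ is not presented as separable or otherwise explicitly tractable, so the countable dense subfamily has to be obtained via separability of the ambient space $L^2(I)$ rather than from a description of $S$ itself. A minor bookkeeping point is the degenerate case where $L^2\sulut{I,\sigma(p)}$ is finite dimensional (e.g.\ $p$ piecewise constant with finitely many values): then Gram--Schmidt terminates and the series $\sum_{n=1}^\infty$ is interpreted as the resulting finite sum (or one appends zeros). Everything else is routine orthonormal-basis machinery.
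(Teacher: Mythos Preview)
Your proposal is correct and follows essentially the same approach as the paper's proof: separability of $L^2\sulut{I,\sigma(p)}$ as a closed subspace of $L^2(I)$, extraction of a countable dense subset of $\Span\sulut{S}$ via lemma~\ref{lemma:span}, and Gram--Schmidt orthonormalization. You supply more detail than the paper does---in particular the explicit diagonal construction of the countable dense family, the verification that $\ip{f}{s_n}=\ip{Pf}{s_n}$, the link to the DN~map, and the finite-dimensional caveat---but the skeleton is identical.
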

\begin{proof}
Since $L^2(I,\sigma(p))$ identifies with a subspace of $L^2(I)$, it is also separable.
By lemma~\ref{lemma:span} there is a linearly independent countable dense subset of $\Span\sulut{S}$. Orthonormalization by the Gram-Schmidt process gives the vectors.
\end{proof}

Every $s_n$ above is some finite linear combination of the functions $\exp(k_m q)$, and the coefficients are determined from finite combinations of $\langle f, \exp(k_m q) \rangle$, the measurements of the DN map. 

By the above, we only need countably many measurements of the DN map, but since the functions $s_n$ depend implicitly on the unknown conductivity~$\gamma$, we have no way to determine which Dirichlet data~$m$ to use beforehand. It is not possible to explicitly reconstruct $Pf$ using proposition~\ref{prop:characterization}.

Functions that can not be detected are simply those belonging to $\ker(P)$, and we have the following simple characterization: 

\begin{proposition} \label{prop:kernel}
The kernel $\ker(P)$ consists of all those functions $f\in L^2(I)$ that integrate to zero on every $\sigma(p)$-set.
\end{proposition}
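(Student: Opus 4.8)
The plan is to exploit that $P$ is an orthogonal projection onto the closed subspace identified with $L^2(I,\sigma(p))$, hence self-adjoint and idempotent. This gives the standard identity $\ker(P) = L^2(I,\sigma(p))^\perp$: if $Pf = 0$ and $g \in L^2(I,\sigma(p))$ then $\ip{f}{g} = \ip{f}{Pg} = \ip{Pf}{g} = 0$, while conversely $f \perp L^2(I,\sigma(p))$ forces $\aabs{Pf}^2 = \ip{f}{Pf} = 0$ because $Pf \in L^2(I,\sigma(p))$. So it remains only to show that a function is orthogonal to $L^2(I,\sigma(p))$ if and only if it integrates to zero over every $\sigma(p)$-set.

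For the forward direction, I would suppose $Pf = 0$ and let $E \in \sigma(p)$. The indicator $\chi_E$ is bounded and $\sigma(p)$-measurable, and since $I$ is bounded it has finite measure, so $\chi_E$ represents an element of $L^2(I,\sigma(p))$. Orthogonality then gives $\int_E f \der x = \ip{f}{\chi_E} = 0$, which is exactly the stated property.

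Conversely, suppose $\int_E f \der x = 0$ for every $E \in \sigma(p)$. By linearity $\ip{f}{s} = 0$ for every $\sigma(p)$-measurable simple function $s$. Such simple functions are dense in $L^2(I,\sigma(p))$: any $h$ there is approximated in $L^2$-norm by its truncations together with the usual simple-function approximation of measurable functions, the convergence being justified by dominated convergence exactly as in the proof of Lemma~\ref{lemma:span}. Hence $f$ is orthogonal to all of $L^2(I,\sigma(p))$, so $Pf = 0$.

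There is no real obstacle here; the only points requiring a little care are the identification of $L^2(I,\sigma(p))$ with a closed subspace of $L^2(I)$ and the density therein of $\sigma(p)$-measurable simple functions, both of which were already set up in the introductory discussion and in the proof of Lemma~\ref{lemma:span}.
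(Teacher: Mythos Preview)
Your argument is correct. It differs from the paper's proof mainly in the machinery invoked: the paper routes the argument through Proposition~\ref{prop:characterization}, using the orthonormal sequence $\{s_n\}\subset\Span(S)$ to represent $Pf$ and then approximating both the $s_n$ and the indicators $\chi_E$ by one another. You instead use the abstract Hilbert-space identity $\ker(P)=L^2(I,\sigma(p))^\perp$ for an orthogonal projection, and then reduce orthogonality to the subspace to orthogonality against indicators via density of $\sigma(p)$-simple functions. Your route is a bit more direct and self-contained, avoiding any appeal to the specific basis coming from $S$; the paper's route has the advantage of staying within the framework just built. The core ingredient---density of $\sigma(p)$-measurable simple functions in $L^2(I,\sigma(p))$---is the same in both.
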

\begin{proof}
A function with the stated properties is in $\ker(P)$ by proposition~\ref{prop:characterization}, because any $s_n$ can be approximated in $L^2(I,\sigma(p))$ by using simple functions. Conversely, if $Pf=0$, then the integral of $f$ taken over any $\sigma(p)$-set is zero, because $\Span\{s_n\}_{n=1}^\infty$ can approximate any characteristic of a $\sigma(p)$-set. 
\end{proof}

\begin{example}
If $I$ is symmetric around the origin and $p$ is an even function, then all the $\sigma(p)$-sets are also symmetric about the origin, and by the above, all odd functions on $I$ must be in $\ker(P)$.
\end{example}

Finally, in terms of the conductivity $\gamma$, which is the function of interest, it is natural to use $P$ to define a nonlinear mapping
\begin{align*}
\widetilde{P} : L^2(I)\cap L^\infty_+ (I) \to L^2(I,\sigma(p))\cap L^\infty_+ (I,\sigma(p))
:
\gamma \mapsto \left(P(\gamma^{-\frac{1}{p-1}})\right)^{-(p-1)},
\end{align*}
To see that this is well-defined, we define, via representatives $g$, the mapping
\begin{align*}
\Phi :  L^2(I)\cap L^\infty_+ (I) \to  L^2(I)\cap L^\infty_+ (I) :  g \mapsto g^{-\frac{1}{p-1}},
\end{align*}
and put $\widetilde{P}= \Phi^{-1}P\Phi$.
Since we have the bounds $1 < p^- < p(x) < p^+ < \infty$, then $\Phi$ maps $L^\infty_+(I)$ to itself, and hence to $L^2(I)$.
We note that $\Phi$ is invertible.
As an aside, we mention that $P$ and $\widetilde{P}$ are topologically conjugate~\cite[section 4.7]{Meiss:2017} as continuous maps on $L^\infty_+\cap L^2$ in the topology of $L^\infty_+$, since $\Phi$ is a homeomorphism from $L^\infty_+$ to itself.

\begin{lemma}
Provided that $g$ is $\sigma(p)$-measurable, then $g^{-\frac{1}{p-1}}$ is also $\sigma(p)$-measurable.
The inverse $g^{-(p-1)}$ has the same property.
\end{lemma}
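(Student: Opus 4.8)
The plan is to reduce everything to the Doob--Dynkin factorisation lemma together with elementary stability of Borel functions under arithmetic operations and composition with continuous maps. By definition $\sigma(p)$ is generated by the sets $p^{-1}(A)$, $A\subseteq\R$ Borel, so $p$ is itself $\sigma(p)$-measurable and, conversely, a real-valued function $g$ on $I$ is $\sigma(p)$-measurable if and only if there is a Borel function $\varphi\colon\R\to\R$ with $g=\varphi\circ p$ (the factorisation lemma; see Dellacherie--Meyer). First I would fix such a $\varphi$ for the given $\sigma(p)$-measurable representative of $g$.

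Next, since $g\in L^\infty_+(I)$ and $p^-\le p(x)\le p^+$ a.e.\ with $1<p^-<p^+<\infty$, after modifying $\varphi$ on a $p$-null Borel set I may assume $\varphi$ takes values in a compact subinterval $[c,C]\subset{]0,\infty[}$ on a neighbourhood $U\supseteq[p^-,p^+]$ of the essential range of $p$. Then
\begin{equation}
\psi(t)=\varphi(t)^{-1/(t-1)}=\expo\!\left(-\tfrac{1}{t-1}\log\varphi(t)\right)
\end{equation}
is well-defined and Borel on $U$, being a composition of the Borel function $\varphi$ with the continuous maps $t\mapsto 1/(t-1)$, $\log$, $\expo$ and multiplication. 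Consequently
\begin{equation}
g^{-1/(p-1)}(x)=\varphi(p(x))^{-1/(p(x)-1)}=(\psi\circ p)(x),
\end{equation}
which is again of the form $(\text{Borel})\circ p$, hence $\sigma(p)$-measurable. Applying the same reasoning to $\widetilde\psi(t)=\varphi(t)^{-(t-1)}=\expo\!\left(-(t-1)\log\varphi(t)\right)$ shows $g^{-(p-1)}=\widetilde\psi\circ p$ is $\sigma(p)$-measurable, giving the second assertion.

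The only point that requires care, rather than being a genuine obstacle, is that $\sigma(p)$-measurability is a property of a representative and $\sigma(p)$ is strictly coarser than the Lebesgue $\sigma$-algebra, so Lebesgue-null sets may not be discarded freely; this is exactly why one works through the fixed factorisation $g=\varphi\circ p$. If one prefers to avoid the factorisation lemma, the same conclusion follows from the observation that the $\sigma(p)$-measurable functions form an algebra that is closed under composition with continuous functions: $1/(p-1)$ and $-(p-1)$ are continuous functions of the $\sigma(p)$-measurable function $p$, $\log g$ is a continuous function of the $\sigma(p)$-measurable $g>0$, products of $\sigma(p)$-measurable functions are $\sigma(p)$-measurable, and $\expo$ is continuous, so $g^{-1/(p-1)}=\expo\!\left(-\tfrac{1}{p-1}\log g\right)$ and $g^{-(p-1)}=\expo\!\left(-(p-1)\log g\right)$ are $\sigma(p)$-measurable.
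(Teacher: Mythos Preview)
Your proof is correct. Both routes you give are valid; neither is precisely what the paper does, though your second one is close in spirit.

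The paper's argument is a bare-hands simple-function approximation: one picks uniformly bounded sequences $(g_j)$ and $(h_j)$ of $\sigma(p)$-measurable simple functions converging pointwise to $g$ and to the exponent $-1/(p-1)$, observes that each $g_j^{h_j}$ is $\sigma(p)$-simple, and passes to the pointwise limit. Your primary route via the Doob--Dynkin factorisation $g=\varphi\circ p$ is genuinely different and arguably cleaner: it identifies the target function as $(\psi\circ p)$ for an explicitly Borel $\psi$, which makes $\sigma(p)$-measurability immediate and avoids any limiting procedure. The care you take with the positivity of $\varphi$ (modifying on a $p$-null Borel set) is exactly the right way to handle the fact that $\sigma(p)$ does not contain all Lebesgue null sets. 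Your alternative paragraph---closure of $\sigma(p)$-measurable functions under products and composition with continuous maps---is effectively the abstract statement that the paper's simple-function argument verifies by hand; either packaging is fine here.
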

\begin{proof}
We consider the function
\begin{equation}
 x \mapsto  (g(x))^{-\frac{1}{p(x)-1}}.
\end{equation}
There exist uniformly bounded sequences of $\sigma(p)$-measurable simple functions, denoted $(g_j)$ and $(h_j)$, converging pointwise to $g$ and the exponent, respectively.
Then $(g_j^{h_j})$ converges pointwise to $g^{-\frac{1}{p-1}}$.
The proof is similar for $g^{-(p-1)}$.
\end{proof}

The projection $P$ also maps $L^\infty_+(I)$ into $L^\infty_+(I,\sigma(p))$. Indeed,
 observe that $\esssup (Pg) \leq \esssup (g)$ for any $g \in L^\infty(I)$, since the averages of $Pg$ and $g$ over any $\sigma(p)$ set must be equal, and the set $\{x\in I \,;\, Pg(x) > \esssup (g) + \eps \}$ has measure zero for every $\eps>0$. Similarly $\essinf (g) \leq \essinf (Pg)$.

Therefore $\widetilde{P}= \Phi^{-1}P\Phi$ is well-defined with the desired mapping properties.
It inherits the projection property $\widetilde{P}\circ \widetilde P = \widetilde{P}$ from $P$.
This proves theorem~\ref{thm:characterization}.

\subsection{Derivatives of the Dirichlet to Neumann map}
\label{sec:derivatives}

In this section our goal is to give a constructive, if ill-posed and inconvenient, alternative to the nonconstructive result in previous section~\ref{sec:characterization}.
Furthermore, the result here is local in the sense that we only need to know the Dirichlet-to-Neumann map in a neighbourhood of the value $m$ for which $K(m)=1$, which we will write as $k$.
That is, we have $K(k) = 1$.
We also write $K_m $ as $ K(m)$ to emphasize the dependence on $m$.

We will state explicit formulae for high order derivatives of inverse and compound functions.
The formulae are used to calculate $\frac{\der^n m}{\der K^n}|_{m=k}$ in terms of derivatives of the DN map, which then allows calculating $\int_I f(x) \sulut{\frac{1}{p(x)-1}}^n \der x$ and proceeding as in the previous section.

Recall that
\begin{equation}
\Lambda_\gamma(m) = \int_I f(x) \sulut{K(m)}^{q(x)} \der x,
\end{equation}
which we now consider as a function of $K$:
\begin{equation}
\Lambda_\gamma(K) = \int_I f(x) K^{q(x)} \der x.
\end{equation}
Also recall
\begin{equation}
m = \int_I f(x) K^{1/\sulut{p(x)-1}} \der x ,
\end{equation}
whence
\begin{equation}
\frac{\der^j m}{\der K^j} = \int_I f(x) K^{1/\sulut{p(x)-1}-j} \prod_{l=0}^{j-1} \sulut{1/\sulut{p(x)-1}-l} \der x.
\end{equation}

We record the following formula for higher order derivatives of inverse functions~\cite{Kaneiwa:2016}.
The facts about the indices follow by elementary manipulation.
\begin{lemma}
\label{lemma:higher_inverse}
Suppose $n \in \Z_+$, for all $j$ we have $s_j \in \N$, $m$ is a $C^n$-function of $K$, and $\frac{\der m}{\der K} \neq 0$ on an open interval.
Then, on that interval,
\begin{equation}
\frac{\der^n K}{\der m^n} = \frac{\sulut{-1}^{n-1}}{\sulut{\frac{\der m}{\der K}}^{2n-1}}
\sum_{
\substack{s_1+s_2+ \cdots = n-1 \\
                               1\cdot s_1 + 2\cdot s_2 + \cdots = 2n-2}}
\frac{\sulut{-1}^{s_1}\sulut{2n-s_1-2}!\prod_{j \in \Z_+}\sulut{\frac{\der^j m}{\der K^j}}^{s_j}}{\prod_{j=2}^{\infty}\sulut{j!}^{s_j}s_j!}.
\end{equation}
Furthermore, we have:
\begin{itemize}
    \item $s_j \neq 0$ implies $j \leq n$.
    \item $s_n \neq 0$ implies the index tuple $(s_1,s_2,\ldots,s_{n-1},s_{n}) = \sulut{n-2,0\ldots,0,1}$ (and $s_1=1$ when $n=1$).
\end{itemize}
\end{lemma}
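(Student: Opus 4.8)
The plan is to quote the displayed identity for $\frac{\der^n K}{\der m^n}$ from Kaneiwa~\cite{Kaneiwa:2016} and to supply the (elementary) argument for the two bullet points about the index tuples. If a self-contained derivation of the identity itself were wanted, it would follow by induction on $n$ from the inverse-function rule $\frac{\der K}{\der m} = \bigl(\frac{\der m}{\der K}\bigr)^{-1}$ together with Faà di Bruno's formula applied to the composite $K \mapsto \frac{\der m}{\der K}$, collecting terms according to partition type; but the combinatorial bookkeeping needed to match the coefficient $\frac{(-1)^{s_1}(2n-s_1-2)!}{\prod_{j\ge 2}(j!)^{s_j} s_j!}$ is exactly what makes citing~\cite{Kaneiwa:2016} the cleaner route. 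So the genuinely new content is the description of which tuples $(s_j)$, with all $s_j \in \N$, occur in the sum, i.e.\ those satisfying
\begin{equation}
\sum_{j\ge 1} s_j = n-1, \qquad \sum_{j\ge 1} j\, s_j = 2n-2.
\end{equation}

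The one observation driving both bullets is obtained by subtracting the first constraint from the second:
\begin{equation}
\sum_{j\ge 1} (j-1)\, s_j = (2n-2)-(n-1) = n-1,
\end{equation}
a sum of nonnegative integers equal to $n-1$ in which the $j=1$ term vanishes. For the first bullet, if $s_j \ne 0$ for some $j \ge 2$ then $j-1 \le (j-1)s_j \le n-1$, so $j \le n$; and $j=1$ trivially satisfies $j \le n$. For the second bullet, assume $n \ge 2$ and $s_n \ne 0$: then $n-1 \le (n-1)s_n \le n-1$ forces $s_n = 1$ and makes every remaining term of $\sum_{j\ge 2}(j-1)s_j$ vanish, hence $s_j = 0$ for $2 \le j \le n-1$ (and $s_j = 0$ for $j > n$ by the first bullet); substituting back into $\sum_{j\ge1} s_j = n-1$ gives $s_1 = n-2$. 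When $n = 1$ the two constraints already force every $s_j = 0$, so the sum has a single term and both statements hold by inspection, with the degenerate tuple $(n-2,0,\dots,0,1)$ read as prescribed.

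I do not anticipate a real obstacle for the bullets: everything reduces to the single inequality above, the only care needed being that $\N$ includes $0$ (so the index set is finite and subtraction of the constraints is legitimate) and that the boundary cases $n = 1, 2$, where $(n-2,0,\dots,0,1)$ collapses, be interpreted correctly. The part that would be laborious, were one not to cite~\cite{Kaneiwa:2016}, is verifying the closed form of the coefficients; as a sanity check the formula reproduces $\frac{\der K}{\der m} = 1/m'$, $\frac{\der^2 K}{\der m^2} = -m''/(m')^3$ and $\frac{\der^3 K}{\der m^3} = (3(m'')^2 - m' m''')/(m')^5$ for $n = 1, 2, 3$.
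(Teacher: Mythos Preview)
Your proposal is correct and matches the paper's own approach exactly: the paper introduces the lemma by saying it ``record[s] the following formula for higher order derivatives of inverse functions~\cite{Kaneiwa:2016}'' and that ``the facts about the indices follow by elementary manipulation,'' without spelling out that manipulation. Your subtraction of the two constraints to obtain $\sum_{j\ge 1}(j-1)s_j = n-1$ is precisely the elementary manipulation the paper alludes to, and you have supplied more detail than the paper itself.
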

We note that the derivative $ \frac{\der m}{\der K}$ is indeed nonzero, though higher order derivatives might not be, if $p$ is constant.

Next we state the well known Faà di Bruno's formula~\cite{Johnson:2002}.
\begin{lemma}[Faà di Bruno's formula]
For $n$~times continuously differentiable $K \mapsto \Lambda_\gamma$ and $m \mapsto K$
\begin{equation}
\frac{\der^n}{\der m^n} \Lambda_\gamma \sulut{K(m)}
=\sum \frac{n!}{k_1!\,k_2!\,\cdots\,k_n!}\cdot
 \Lambda_\gamma^{(k_1+\cdots+k_n)}(K(m))\cdot
\prod_{j=1}^n\left(\frac{K^{(j)}(m)}{j!}\right)^{k_j},
\end{equation}
where the sum is over $n$-tuples of numbers $k_j$ with
\begin{equation}
\sum_{j=1}^n jk_j = n.
\end{equation}
\end{lemma}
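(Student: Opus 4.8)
The identity is classical, and one may simply invoke Johnson~\cite{Johnson:2002}; for completeness I sketch a combinatorial proof. The plan has two steps: first establish the \emph{partition form} of the derivative of a composition, and then group its terms according to block sizes. The $n$-fold continuous differentiability of $K \mapsto \Lambda_\gamma$ and of $m \mapsto K$ is exactly what is needed to guarantee that every derivative written below exists and that the repeated differentiation of the composition is legitimate.

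First I would prove, by induction on $n \ge 1$, the partition form
\begin{equation}
\frac{\der^n}{\der m^n} \Lambda_\gamma\sulut{K(m)} = \sum_{\pi} \Lambda_\gamma^{(\abs{\pi})}\sulut{K(m)} \prod_{B \in \pi} K^{(\abs{B})}(m),
\end{equation}
where the sum ranges over all partitions $\pi$ of $\joukko{1,\dots,n}$ into nonempty blocks $B$, $\abs{\pi}$ denotes the number of blocks and $\abs{B}$ the size of a block. The base case $n=1$ is the chain rule. For the inductive step, apply $\der/\der m$ to a single summand indexed by a partition $\pi$ of $\joukko{1,\dots,n}$: the product and chain rules produce the summand indexed by the partition of $\joukko{1,\dots,n+1}$ that adjoins $\joukko{n+1}$ as a new singleton block (this comes from differentiating $\Lambda_\gamma^{(\abs{\pi})}(K(m))$, which creates the extra factor $K'(m) = K^{(1)}(m)$), together with, for each $B \in \pi$, the summand indexed by the partition obtained by inserting $n+1$ into $B$ (from differentiating the factor $K^{(\abs{B})}(m)$). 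Inspecting the block containing $n+1$ shows that every partition of $\joukko{1,\dots,n+1}$ arises in exactly one of these two ways from a unique partition of $\joukko{1,\dots,n}$, so summing over $\pi$ yields the formula for $n+1$.

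Next I would pass to the stated form by grouping partitions by type. For a partition $\pi$ of $\joukko{1,\dots,n}$, let $k_j$ be the number of its blocks of size $j$; then $\sum_{j=1}^n j k_j = n$, $\abs{\pi} = k_1 + \dots + k_n$, and $\prod_{B \in \pi} K^{(\abs{B})}(m) = \prod_{j=1}^n \sulut{K^{(j)}(m)}^{k_j}$, so all partitions of a fixed type $(k_1,\dots,k_n)$ contribute identical summands. The number of partitions of $\joukko{1,\dots,n}$ of type $(k_1,\dots,k_n)$ is $n! \big/ \prod_{j=1}^n \sulut{(j!)^{k_j}\, k_j!}$: order the $n$ labels, cut the row into consecutive blocks of the prescribed sizes, then divide by $(j!)^{k_j}$ for the irrelevant orderings within the size-$j$ blocks and by $k_j!$ for the irrelevant orderings among the $k_j$ blocks of size $j$. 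Multiplying this count by $\Lambda_\gamma^{(k_1+\dots+k_n)}(K(m)) \prod_{j=1}^n \sulut{K^{(j)}(m)}^{k_j}$ and absorbing $\prod_j (j!)^{-k_j}$ into the product turns the partition form into
\begin{equation}
\sum_{\sum_{j=1}^n j k_j = n} \frac{n!}{k_1! \cdots k_n!}\, \Lambda_\gamma^{(k_1+\dots+k_n)}\sulut{K(m)} \prod_{j=1}^n \sulut{\frac{K^{(j)}(m)}{j!}}^{k_j},
\end{equation}
which is the asserted identity.

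The only genuine work is the combinatorial accounting in the last step: verifying that the partitions of a given type number $n! \big/ \prod_j \sulut{(j!)^{k_j} k_j!}$, and that the two differentiation moves in the inductive step hit every partition of $\joukko{1,\dots,n+1}$ exactly once. Both are standard facts about set partitions, and the entire argument can be bypassed by simply quoting Johnson~\cite{Johnson:2002}.
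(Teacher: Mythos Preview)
Your argument is correct: the induction via set partitions and the subsequent grouping by type is the standard combinatorial derivation, and the count $n!\big/\prod_{j}\bigl((j!)^{k_j}k_j!\bigr)$ is right. Note, however, that the paper does not prove this lemma at all; it merely states it as ``the well known Fa\`a di Bruno's formula'' with a citation to Johnson~\cite{Johnson:2002}. Your sketch therefore goes well beyond what the paper does, supplying a full self-contained justification where the paper is content to quote the literature. Either choice is appropriate here, since the lemma is classical and not the point of the section.
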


We have
\begin{equation}
\frac{\der^j \Lambda_\gamma}{\der K^j}(K) = \int_I f(x) K^{q(x)-j} \prod_{l=0}^{j-1} \sulut{q(x)-l} \der x.
\end{equation}

Hence, the derivative of the DN map with respect to $m$ at $m=k$ is
\begin{equation}
\begin{split}
\frac{\der \Lambda_\gamma \sulut{K(m)}}{\der m} |_{m=k}
&=
 \frac{\int_I f(x) \frac{p(x)}{p(x)-1} \der x}{\int_I f(x) \frac{1}{p(x)-1} \der x}.
 \end{split}
\end{equation}
We observe that since $p(x) >p^- > 1$, the derivative is always greater than 1.
We already know $\int_I f(x) \der x$ due to corollary~\ref{lemma:av_sigma}.
Using $\frac{p}{p-1} = 1 + \frac{1}{p-1}$ we get
\begin{equation}
\int_I\frac{1}{p(x)-1}f(x) \der x = \int_I f(x) \der x \sulut{\frac{\der}{\der m} \Lambda_\gamma \sulut{1} - 1}^{-1},
\end{equation}
which is well-defined, since the DN map is strictly greater than one.
Hence, we now know the dual pairing of $f$ against $1$ and $1/ (p(x)-1)$.
This proves the following lemma:
\begin{lemma}
\label{lemma:pair_one}
\begin{equation}
\int_I f(x) \frac{1}{p(x)-1} \der x
\end{equation}
can be recovered from
\begin{equation}
\Lambda_\gamma (K(k)) \text{ and } \frac{\der \Lambda_\gamma (K(m))}{\der m}|_{m=k}.
\end{equation}
\end{lemma}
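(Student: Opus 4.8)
The plan is to read off the pairing $\int_I f(x)/(p(x)-1)\,\der x$ from the chain rule applied to the composite map $m \mapsto \Lambda_\gamma(K(m))$ at the distinguished point $m = k$, where $K(k) = 1$, and then to solve a single linear equation. First I would differentiate under the integral sign in the two formulae $\Lambda_\gamma(K) = \int_I f(x) K^{q(x)} \der x$ and $m = \int_I f(x) K^{1/(p(x)-1)} \der x$ with respect to $K$; this is legitimate by dominated convergence, since $f$ is bounded, $1 < p^- \le p(x) \le p^+ < \infty$, and $K$ may be confined to a compact neighbourhood of $1$. Because $m \mapsto K_m$ is a strictly increasing continuous bijection by lemma~\ref{lemma:m2K_bijection} and $\der m/\der K > 0$, the inverse $m \mapsto K(m)$ is differentiable at $k$ with $\der K/\der m|_{m=k} = (\der m/\der K|_{K=1})^{-1}$.

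Evaluating at $K = 1$ gives $\der \Lambda_\gamma/\der K|_{K=1} = \int_I f(x) q(x)\,\der x$ and $\der m/\der K|_{K=1} = \int_I f(x)/(p(x)-1)\,\der x$, so the chain rule yields
\begin{equation}
\frac{\der \Lambda_\gamma(K(m))}{\der m}\Big|_{m=k} = \frac{\int_I f(x)\, q(x)\, \der x}{\int_I f(x)\, \frac{1}{p(x)-1}\, \der x}.
\end{equation}
Then I would use $q(x) = p(x)/(p(x)-1) = 1 + 1/(p(x)-1)$ to split the numerator as $\int_I f(x)\,\der x + \int_I f(x)/(p(x)-1)\,\der x$, and note that $\Lambda_\gamma(K(k)) = \int_I f(x)\,1^{q(x)}\,\der x = \int_I f(x)\,\der x$ since $K(k)=1$. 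Rearranging the resulting identity expresses $\int_I f(x)/(p(x)-1)\,\der x$ as $\Lambda_\gamma(K(k))$ divided by $\bigl(\frac{\der}{\der m}\Lambda_\gamma(K(m))|_{m=k} - 1\bigr)$, which is the assertion; this is well defined because each $q(x) > 1$, so the derivative above is strictly larger than $1$.

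I do not expect a genuine obstacle here: the argument is essentially a one-line chain-rule computation. The only points that deserve care are the justification of differentiating the two integrals in $K$ and the differentiability of the inverse function $m \mapsto K(m)$, and both follow immediately from the uniform bounds on $p$, the boundedness of $f = \gamma^{-1/(p-1)}$, and lemma~\ref{lemma:m2K_bijection}.
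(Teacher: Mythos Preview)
Your proposal is correct and follows essentially the same route as the paper: both compute $\der\Lambda_\gamma/\der K|_{K=1}$ and $\der m/\der K|_{K=1}$, apply the chain rule at $m=k$, use $q=1+1/(p-1)$, and solve for $\int_I f/(p-1)\,\der x$ after observing the derivative exceeds $1$. Your version is slightly more self-contained in that you read off $\int_I f\,\der x$ directly as $\Lambda_\gamma(K(k))$ (as the lemma statement invites) rather than invoking corollary~\ref{lemma:av_sigma}, and you make explicit the dominated-convergence and inverse-function justifications that the paper leaves implicit.
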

We prove the analogous claim for higher powers of $\frac{1}{p(x)-1}$ by induction.

\begin{proposition}
Suppose $n \in \N$.
The quantity
\begin{equation}
\int_I f(x) \sulut{\frac{1}{p(x)-1}}^n \der x
\end{equation}
can be recovered from
\begin{equation}
\frac{\der^l \Lambda_\gamma \sulut{K(m)}}{\der m^l}|_{m=k} \text{ for } l \in \joukko{0,\ldots,n}.
\end{equation}
\end{proposition}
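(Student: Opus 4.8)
The plan is to argue by induction on $n$. The case $n=0$ is immediate: since $K(k)=1$, the zeroth order datum is $\Lambda_\gamma(K(k)) = \int_I f(x)\der x$, which is also the quantity of corollary~\ref{lemma:av_sigma}; and $n=1$ is lemma~\ref{lemma:pair_one}. So I would fix $n\ge 2$ and assume that for every $l<n$ the moment
\[
M_l := \int_I f(x)\sulut{\frac{1}{p(x)-1}}^l \der x
\]
has already been reconstructed from the derivatives $\tfrac{\der^j}{\der m^j}\Lambda_\gamma(K(m))\big|_{m=k}$ with $j\le l$. In particular $M_0=k>0$ and $M_1>0$ are known.

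The structural fact I would exploit is that, because $q(x)=1+\tfrac{1}{p(x)-1}$, lemma~\ref{lemma:DN-px} and the relation $m=\int_I f(x)K^{1/(p(x)-1)}\der x$ combine to give $\Lambda_\gamma(K(m))=m\,K(m)$. Differentiating $n$ times by the Leibniz rule and evaluating at $m=k$ gives
\[
\frac{\der^n}{\der m^n}\Lambda_\gamma(K(m))\Big|_{m=k} \;=\; k\,K^{(n)}(k) + n\,K^{(n-1)}(k),
\]
where $K^{(j)}$ denotes $\der^j K/\der m^j$. (If one does not notice this identity, Faà di Bruno's formula together with $\Lambda_\gamma^{(j)}(1)=\int_I f(x)\prod_{l=0}^{j-1}\sulut{q(x)-l}\der x$ produces the same triangular structure, at the cost of a longer computation.)

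Next I would invoke lemma~\ref{lemma:higher_inverse}, which expresses $K^{(j)}(k)$ as a fixed polynomial in $\tfrac{\der m}{\der K}\big|_{m=k},\dots,\tfrac{\der^j m}{\der K^j}\big|_{m=k}$ divided by a power of $\tfrac{\der m}{\der K}\big|_{m=k}$; and the formula for $\der^j m/\der K^j$ in this section gives, at $m=k$ (i.e.\ $K=1$),
\[
\frac{\der^j m}{\der K^j}\Big|_{m=k} = \int_I f(x)\prod_{l=0}^{j-1}\sulut{\frac{1}{p(x)-1}-l}\der x = M_j + \sum_{i=1}^{j-1} c_{j,i}\, M_i
\]
for universal constants $c_{j,i}$, because $\prod_{l=0}^{j-1}(t-l)$ is monic of degree $j$ in $t$. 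In particular $\tfrac{\der m}{\der K}\big|_{m=k}=M_1\neq 0$. Hence $K^{(n-1)}(k)$ is a rational expression in $M_1,\dots,M_{n-1}$, all of which are known by the inductive hypothesis; substituting this and the (known) left hand side into the displayed identity lets me solve for $K^{(n)}(k)$, since its coefficient there is $k=M_0>0$.

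It remains to read off $M_n$ from $K^{(n)}(k)$, and this is the step I expect to be the real work. The only place $M_n$ can enter $K^{(n)}(k)$ is through $\tfrac{\der^n m}{\der K^n}\big|_{m=k}$, and by the index facts in lemma~\ref{lemma:higher_inverse} the unique index tuple with $s_n\neq 0$ is $(n-2,0,\dots,0,1)$; tracking that single term through the formula yields
\[
K^{(n)}(k) = -\,M_1^{-(n+1)}\,M_n + (\text{a quantity expressible in } M_1,\dots,M_{n-1}),
\]
so, $M_1$ being known and positive, $M_n$ is determined. The main obstacle is precisely this bookkeeping: checking that the coefficient of $M_n$ is nonzero and that no index $s_n$ can occur except in that one tuple, so that the resulting linear system in $M_0,\dots,M_n$ is genuinely triangular. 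Everything else — differentiation under the integral sign, justified by boundedness of $I$, $f$ and $p$, and smoothness of $K(m)$ near $m=k$ coming from $\tfrac{\der m}{\der K}\neq 0$ — is routine.
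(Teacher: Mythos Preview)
Your argument is correct and in fact tidier than the paper's. The key difference is the identity $\Lambda_\gamma(K(m)) = m\,K(m)$, which follows immediately from $q(x)=1+\tfrac{1}{p(x)-1}$ and factors out one power of $K$ from the integrand; this reduces the chain-rule bookkeeping to a single Leibniz differentiation and yields the clean recursion $\tfrac{\der^n}{\der m^n}\Lambda_\gamma\big|_{m=k}=k\,K^{(n)}(k)+n\,K^{(n-1)}(k)$. The paper instead applies Fa\`a di Bruno's formula directly to $\Lambda_\gamma(K(m))$, isolates the two extreme index tuples $(n,0,\dots,0)$ and $(0,\dots,0,1)$, and then cancels the resulting ``$\Lambda$-derivatives'' against the ``$m$-derivatives'' using $p/(p-1)=1+1/(p-1)$; this is the same cancellation your identity encodes, just performed after expansion rather than before. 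From that point on the two proofs coincide: both invoke lemma~\ref{lemma:higher_inverse}, use the index fact that $s_n\neq 0$ forces the single tuple $(n-2,0,\dots,0,1)$, and extract $M_n$ from the monic falling factorial. What your route buys is brevity and transparency (no Fa\`a di Bruno, two unknowns per step rather than a sum over partitions); what the paper's route buys is that it never relies on spotting the product structure $\Lambda_\gamma=mK$, so it would generalize verbatim if that relation were perturbed.
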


\begin{proof}
Corollary~\ref{lemma:av_sigma} establishes the case~$n=0$ and lemma~\ref{lemma:pair_one} the case~$n=1$.
We proceed by strong induction and suppose $n > 1$.
That is, we assume that the following integrals are known for all $j < n$:
\begin{equation}
\int_I f(x) \sulut{\frac{1}{p(x)-1}}^{j} \der x.
\end{equation}

In Faà di Bruno's formula for the derivative of order~$n$ there is only one $n$-tuple where
$\frac{\der^n \Lambda_\gamma}{\der K^n}$ appears,
namely $\sulut{n,0,\ldots,0}$.
Likewise, in the formula for the higher order inverse function, there is only one term with derivative of order~$n$, $\sulut{n-2,0,\ldots,0,1}$.
This latter term appears in Faà di Bruno's formula precisely when the $n$-tuple is $\sulut{0,\ldots,1}$.
The other summands in the formulae only contain known terms, not
\begin{equation}
\int_I f(x) \sulut{\frac{1}{p(x)-1}}^n \der x.
\end{equation}

By Faà di Bruno's formula we write the derivative as
\begin{equation}
\begin{split}
\frac{\der^n \Lambda_\gamma}{\der m^n}|_{m=k} &  = 
\frac{\der^n \Lambda_\gamma}{\der K^n}|_{K=1} \cdot
\left( \frac{\der m}{\der K}|_{K=1} \right)^{-n} 
+ \frac{\der \Lambda_\gamma}{\der K}|_{K=1} \frac{\der^n K}{\der m^n}|_{m=k}
+ S_1 ,
\end{split}
\end{equation}
where $S_1$ is known by the induction hypothesis.
Rewriting, we have
\begin{equation}
\begin{split}
\frac{\der^n \Lambda_\gamma}{\der m^n}|_{m=k} - S_1 
&=
\frac{\der^n m}{\der K^n}|_{K=1}
\sulut{ \int_I f(x) \frac{1}{p(x)-1} \der x}^{-n} \\
&+ \int_I f(x) \frac{p(x)}{p(x)-1} \der x \frac{\der^n K}{\der m^n}|_{m=k},
\end{split}
\end{equation}
where, by lemma~\ref{lemma:higher_inverse},
\begin{equation}
\begin{split}
\frac{\der^n K}{\der m^n}&|_{m=k} = -\sulut{\frac{\der m}{\der K}|_{K=1}}^{-(n+1)}\frac{\der^n m}{\der K^n}|_{K=1} 
+ S_2,
\end{split}
\end{equation}
where $S_2$ is known by the induction hypothesis.
This gives
\begin{equation}
\begin{split}
\frac{\der^n \Lambda_\gamma}{\der m^n}|_{m=k} - S_1 &- S_2\int_I f(x) \frac{p(x)}{p(x)-1} \der x \\
&=
\frac{\der^n m}{\der K^n}|_{K=1}
\sulut{ \int_I f(x) \frac{1}{p(x)-1} \der x}^{-n} \\
& -\sulut{\frac{\der m}{\der K}|_{K=1}}^{-(n+1)}\frac{\der^n m}{\der K^n}|_{K=1} \int_I f(x)  \frac{p(x)}{p(x)-1} \der x.
\end{split}
\end{equation}
Using $p/(p-1) = 1 + 1/(p-1)$ we get
\begin{equation}
\begin{split}
\frac{\der^n \Lambda_\gamma}{\der m^n}|_{m=k} - S_1 &- S_2\int_I f(x) \frac{p(x)}{p(x)-1} \der x \\
&= -\sulut{\frac{\der m}{\der K}|_{K=1}}^{-(n+1)}\frac{\der^n m}{\der K^n}|_{K=1} \int_I f(x) \der x.
\end{split}
\end{equation}
Because everything else on the right hand side is positive, this gives an explicit formula for
\begin{equation}
\frac{\der^n m}{\der K^n}|_{K=1} = \int_I f(x) \prod_{l=0}^{n-1} \sulut{1/\sulut{p(x)-1}-l} \der x,
\end{equation}
where $\prod_{l=0}^{n-1} \sulut{1/\sulut{p(x)-1}-l}$ is a polynomial in $1/(p(x)-1)$ of order $n$ and with leading coefficient 1 for its highest order term.
By the induction hypothesis, the dual pairings of all the terms but the highest order one with $f$ are known.
This proves the claim.
\end{proof}

The following lemma is very similar to lemma~\ref{lemma:span} and \cite{Brander:Ilmavirta:Tyni}.
The proof also follows the proof of Nathaniel Eldredge~\cite{Eldredge:2018}.
\begin{lemma}
\label{lemma:polynomials}
Let $1 < r < \infty$. Then
\begin{equation}
\overline{\Span\sulut{\joukko{\sulut{\frac{1}{p(x)-1}}^n; n \in \N}}}^{L^r(I)} = L^r\sulut{I, \sigma(p)}.
\end{equation}
\end{lemma}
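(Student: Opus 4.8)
The plan is to mirror the proof of lemma~\ref{lemma:span} almost verbatim, replacing the multiplicative system $S = \{\exp(k_m q)\}$ with the system of powers $M = \Span\{(1/(p-1))^n ; n \in \N\}$. The key observations are: (i) the set $\{(1/(p(x)-1))^n ; n \in \N\}$ is closed under pointwise multiplication (the product of $(1/(p-1))^n$ and $(1/(p-1))^m$ is $(1/(p-1))^{n+m}$), so $M$ is an algebra containing the constants (take $n=0$); and (ii) the $\sigma$-algebra generated by $M$ is exactly $\sigma(p)$. For (ii), note that $x \mapsto 1/(p(x)-1)$ is an injective measurable function of $p(x)$ on the relevant range $]1,\infty[$ (indeed a homeomorphism of $]1,\infty[$ onto $]0,\infty[$), exactly as was used for $q$ earlier in the paper, so $\sigma(1/(p-1)) = \sigma(p)$, and adding powers of a function does not enlarge the generated $\sigma$-algebra.

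Next I would run the two inclusions. For $\overline{\Span(M)}^{L^r(I)} \subseteq L^r(I,\sigma(p))$: any element of the closure is the $L^r$-limit of a Cauchy sequence in $\Span(M)$, which by \cite[theorems 3.11 and 3.12]{Rudin:2006} has an almost everywhere convergent subsequence; since each $(1/(p-1))^n$ is $\sigma(p)$-measurable and $\Span(M)$ consists of $\sigma(p)$-measurable functions, the a.e.\ limit is $\sigma(p)$-measurable, giving the inclusion. For the reverse inclusion, set $H$ to be the set of bounded measurable functions lying in equivalence classes inside $\overline{\Span(M)} \cap L^\infty(I)$. Then $M \subset H$ is closed under pointwise multiplication; $H$ contains the constants (since $1 \in M$, using that $I$ is bounded so constants are in every $L^r(I)$, and they are bounded); and $H$ is closed under pointwise convergence of uniformly bounded increasing sequences, because for such a sequence $(f_j)$ converging pointwise to a bounded $f$, the dominated convergence theorem gives $f_j \to f$ in $L^r(I)$, so $f$'s class lies in $\overline{\Span(M)}$. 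By the multiplicative system theorem (theorem~\ref{thm:multisystem}), $H$ contains all bounded $\sigma(p)$-measurable functions, hence $L^r(I,\sigma(p)) \cap L^\infty(I,\sigma(p)) \subseteq \overline{\Span(M)} \cap L^\infty(I)$. Finally, for a general $h \in L^r(I,\sigma(p))$ with $\sigma(p)$-measurable representative, the truncations $h_j = \max\{-j,\min\{h,j\}\}$ are bounded and $\sigma(p)$-measurable, hence in $H$, and $h_j \to h$ in $L^r$ by dominated convergence, so $h \in \overline{\Span(M)}$; this gives the reverse inclusion and completes the proof.

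The only point requiring genuine care — and I expect this to be the main (mild) obstacle, exactly as in lemma~\ref{lemma:span} — is the verification that the $\sigma$-algebra generated by $M = \Span\{(1/(p-1))^n\}$ equals $\sigma(p)$ rather than something smaller or larger, and the attendant subtlety that $L^r(I,\sigma(p))$ is not literally a subspace of $L^r(I)$ (the equivalence classes differ because $\sigma(p)$ need not contain all Lebesgue null sets), so one must consistently work with $\sigma(p)$-measurable representatives when invoking the multiplicative system theorem and when identifying the limit space. Everything else is a routine transcription of the earlier argument, with $\exp(k_m q)$ replaced by $(1/(p-1))^n$ and Stone--Weierstrass/the moment-type intuition standing in for the analogy with the Hausdorff moment problem.
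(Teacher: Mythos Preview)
Your proposal is correct and follows essentially the same route as the paper's proof: both note that $p\mapsto 1/(p-1)$ is a homeomorphism so $\sigma(1/(p-1))=\sigma(p)$, observe that $n=0$ supplies the constants and that the powers are closed under multiplication, and then rerun the two-inclusion argument of lemma~\ref{lemma:span} via a.e.\ convergent subsequences and the multiplicative system theorem. The paper's version is terser (it simply says the argument is ``essentially the same'' as lemma~\ref{lemma:span}), but the content is identical to what you wrote.
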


\begin{proof}
We first observe that $p \mapsto 1/(p-1)$ is a homeomorphism from $]1,\infty[$ to $]0,\infty[$.
This implies that $\sigma(p) = \sigma (1/(p-1))$.
Since $p$ is bounded away from one and infinity, $1/(p-1)$ is likewise bounded and hence
\begin{equation}
\Span\sulut{\joukko{\sulut{\frac{1}{p(x)-1}}^n; n \in \N}} \subseteq L^r\sulut{I}.
\end{equation}
The same argument as in the proof of lemma~\ref{lemma:span} establishes
\begin{equation}
\Span\sulut{\joukko{\sulut{\frac{1}{p(x)-1}}^n; n \in \N}} \subseteq L^r\sulut{I,\sigma(p)}.
\end{equation}

The proof for the inclusion
\begin{equation}
 L^r\sulut{I,\sigma(p)} \subseteq \Span\sulut{\joukko{\sulut{\frac{1}{p(x)-1}}^n; n \in \N}}
\end{equation}
is essentially the same as in lemma~\ref{lemma:span}, though in this case $n=0$ gives that the function~$1 \in H$.
In particular, multiplying two linear combinations of polynomials (of $1/(p-1)$) still gives a polynomial.
\end{proof}




Proving theorem~\ref{thm:characterization} proceeds as in the non-constructive case.

\bibliographystyle{plain}
\bibliography{math}

\end{document}